\documentclass[12pt]{amsart}
\usepackage{amssymb,latexsym}
\usepackage{amsmath}
\usepackage{amsthm}
\usepackage[T1,T5]{fontenc}
\usepackage{enumerate}
\makeatletter
\@namedef{subjclassname@2010}{%
  \textup{2010} Mathematics Subject Classification}
\makeatother

\newtheorem{thm}{Theorem}[section]
\newtheorem{cor}[thm]{Corollary}
\newtheorem{lem}[thm]{Lemma}
\newtheorem{prop}[thm]{Proposition}

\newtheorem{mainthm}{Theorem}


\theoremstyle{definition}
\newtheorem{defin}[thm]{Definition}
\newtheorem{rem}[thm]{Remark}
\newtheorem{exa}[thm]{Example}



\numberwithin{equation}{section}

\frenchspacing





\newcommand{\abs}[1]{\left|#1\right|}

\newcommand{\C}{\mathbb{C}}

\newcommand{\Cn}{\mathbb{C}^n}

\newcommand{\F}{\mbox{$\mathcal{F}$}}

\newcommand{\PSH}[1]{\mbox{$\mathcal{PSH}(#1)$}}
\newcommand{\USC}[1]{\mbox{$\mathcal{USC}(#1)$}}

\newcommand{\PSHo}[1]{\mbox{$\mathcal{PSH}^o(#1)$}}

\renewcommand{\F}{\mbox{$\mathcal{F}$}}
\newcommand{\M}[3]{\mbox{$\mathcal{M}^{#2}_{#1}\left(#3\right)$}}

\newcommand{\D}{\mathbb{D}}
\newcommand{\J}[2]{\mathcal{J}_{#1}(#2)}

\renewcommand{\phi}{\varphi}

\renewcommand{\bar}[1]{\overline{#1}}

\newcommand{\supp}[1]{\operatorname{supp}{\!#1}}
\renewcommand{\tilde}[1]{\widetilde{#1}}

\renewcommand{\epsilon}{\varepsilon}

\begin{document}
\title[Approximation and Plurisubharmonic Functions]{Plurisubharmonic Approximation and Boundary Values of Plurisubharmonic Functions}
\author[L. Hed]{Lisa Hed}
\address{Department of Mathematics and Mathematical Statistics\\
Ume\aa{} University\\
SE-901 87 Ume\aa{}, Sweden}
\email{Lisa.Hed@math.umu.se}

\author[H. Persson]{H\aa{}kan Persson}
\address{Department of Mathematics\\
Uppsala University\\
SE-751 06 Uppsala, Sweden}
\email{hakan.persson@math.uu.se}

\date{}

\begin{abstract}
	We study the problem of approximating plurisubharmonic functions on a bounded domain $\Omega$ by continuous plurisubharmonic functions defined on neighborhoods of $\bar\Omega$. It turns out that this problem can be linked to the problem of solving a Dirichlet type problem for functions plurisubharmonic on the compact set $\bar\Omega$ in the sense of Poletsky. A stronger notion of hyperconvexity is introduced to fully utilize this connection, and we show that for this class of domains the duality between the two problems is perfect. In this setting, we give a characterization of plurisubharmonic boundary values, and prove some theorems regarding the approximation of  plurisubharmonic functions.
\end{abstract}
\subjclass[2010]{Primary 32U05, 31C10; Secondary 46A55}

\keywords{plurisubharmonic functions on compacts, Jensen measures, monotone convergence}

\hyphenation{plu-ri-sub-harm-on-ic}
\maketitle
\section{Introduction}
The motivation for the current paper is twofold. Firstly, we are interested in the problem of approximating an upper semi-continuous function $u$ defined on the closure of a bounded domain $\Omega \subset \Cn$ and plurisubharmonic on its interior by smooth plurisubharmonic functions defined on neighborhoods of $\bar \Omega$. This type of approximation can be seen as a compound of the approximation problem studied by Forna\ae{}ss and Wiegerinck in \cite{FW} where the attention was restricted to the approximation of \emph{continuous} plurisubharmonic functions and a approximation problem studied by Wikstr\"{o}m \cite{W} and G\"og\"us \cite{Go} (see also \cite{NW}), where the approximants are only suppose to be defined on $\bar\Omega$. Similar approximation problems has also been studied by Q. D. Nguyen, see \cite{Dieu}. G\"{o}g\"{u}s \cite[Example 7.3]{Go} has shown that already the Wikstr\"om type approximation may fail in very nice domains, such as star-shaped, strongly hyperconvex smooth domains. Therefore, it makes sense not only to ask for what domains such an approximation always is possible, but to ask for a characterization of those plurisubharmonic functions $u$ that can be monotonely approximated from outside.  According to a result by the authors and R. Czy\.{z} \cite{CHP}, this is possible if and only if the function $u$ is plurisubharmonic on the compact set $\bar\Omega$.

The notion of a plurisubharmonic function on a compact set in $\Cn$ can be traced back to the works of Rickarts \cite{Ri}, Gamelin \cite{Ga} and Gamelin and Sibony \cite{GS}, but its modern form is definitely due to Poletsky \cite{PAG}. It has successfully been used to answer questions linked to polynomial hulls, polynomial convexity and analytic structure, but in this paper we apply the notion to the field of plurisubharmonic approximation. Postponing the complete definition of plurisubharmonic functions on compact sets to the next section, we content ourself with saying that given a compact set $X \subset \Cn$, one attaches to each point $z\in X$, a class of probability measures $\J{z}{X}$, the so called Jensen measures, and say that an upper semicontinuous function $u$ is plurisubharmonic on $X$ if $u(z) \leq \int u\, d\mu$ whenever $\mu \in \J{z}{X}$.

The main question in this paper is: Given a bounded domain $\Omega$, how can you tell if a function $u \in \PSH{\Omega}$, that is upper semicontinuous on $\bar\Omega$, also is plurisubharmonic in the sense of Poletsky on $\bar \Omega$. One obvious necessary condition to be fulfilled is that the following Dirichlet problem can be solved:
\[
	\begin{cases}
		v \mbox{ is plurisubharmonic on } \bar\Omega, \\
		v=u \mbox{, on } \partial \Omega.
		\end{cases}
\]
One readily establishes that in case $\Omega$ is hyperconvex, this problem can be solved if the boundary function $u \in \PSH{\partial \Omega}$.
 It is however not true that this necessary condition is sufficient. We  therefore ask the question, under which circumstances the plurisubharmonicity in the sence of Poletsky of a function $u \in \PSH{\Omega}$ is solely determined by its boundary values. This leads us to defining a new notion of hyperconvexity, more fitting to our setting. We say that a bounded domain $\Omega \subset \Cn$ is \emph{P-hyperconvex} if there exists a non-constant function $u \in \PSH{\bar\Omega}$ such that $u(z)=0$ for all $z \in \partial \Omega$.

By simple examples, we show that this regularity condition is strictly stronger than hyperconvexity and strictly weaker than the notions of strict hyperconvexity as studied by for example Bremermann \cite{Br}, Nivoche \cite{N} and Poletsky \cite{PsHx}.

To better understand this notion, we give several different characterisations of P-hyperconvexity, all of which has its counterpart for hyperconvex domains.

\begin{mainthm}\label{thm:a}
Let $\Omega \Subset \C^n$ be a domain. The following are equivalent:
\begin{enumerate}
\item $\Omega$ is P-hyperconvex,
\item for every $z \in \partial \Omega$ and every $\mu \in \J{z}{\bar \Omega}$,  $\supp(\mu) \subset \partial \Omega$,
\item for every $z \in \partial \Omega$ there exists a function $\varphi \in \PSH{\bar \Omega}$, $\varphi \not\equiv 0$ such that $\varphi \leq 0$ and $\varphi(z)=0$,
\item for every $z \in \bar\Omega$, there exists a neighborhood $V_z$ of $z$ such that $V_z\cap\Omega$ is P-hyperconvex.
\end{enumerate}
\end{mainthm}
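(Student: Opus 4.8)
The plan is to run the cycle $(1)\Rightarrow(3)\Rightarrow(2)\Rightarrow(1)$ and to add $(1)\Rightarrow(4)$ and $(4)\Rightarrow(2)$. Three ingredients enter: the ordinary maximum principle — usable because small spheres carry Jensen measures, so that every function in $\PSH{\bar\Omega}$ restricts to an ordinary plurisubharmonic function on $\Omega$; Edwards' duality theorem, identifying the largest $\PSH{\bar\Omega}$-minorant of a lower semicontinuous function with the pointwise infimum of its integrals against the Jensen measures; and a localization principle for Jensen measures. The observation that makes the first three implications nearly formal is the following: \emph{if $\varphi\in\PSH{\bar\Omega}$ satisfies $\varphi\le0$ and $\varphi(z)=0$ for some $z\in\partial\Omega$, and $\varphi\not\equiv0$, then $\varphi<0$ throughout $\Omega$} — otherwise $\varphi$ would attain an interior maximum, hence be $\equiv0$ on the connected set $\Omega$ and so, by upper semicontinuity, $\equiv0$ on $\bar\Omega$. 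In particular the witness $u$ in (1) is $\le0$ by the maximum principle ($\max_{\bar\Omega}u=\max_{\partial\Omega}u=0$) and, being non-constant with $u=0$ on $\partial\Omega$, is $\not\equiv0$; hence $u<0$ on $\Omega$.

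Granting this, $(1)\Rightarrow(3)$ is immediate: the witness $u$ serves as $\varphi$ at every $z\in\partial\Omega$. For $(3)\Rightarrow(2)$, fix $z\in\partial\Omega$ and $\mu\in\J z{\bar\Omega}$ and take $\varphi$ as in (3); it is strictly negative on $\Omega$, while $0=\varphi(z)\le\int\varphi\,d\mu\le0$ forces $\varphi=0$ $\mu$-a.e., so $\mu(\Omega)=0$, i.e. $\supp(\mu)\subset\partial\Omega$. For $(2)\Rightarrow(1)$, fix a closed ball $\bar B\Subset\Omega$ and put $f=-\chi_{\bar B}$, which is lower semicontinuous and vanishes off $\bar B$, in particular on $\partial\Omega$; let $u$ be the upper regularization of $z\mapsto\inf\{\int f\,d\mu:\mu\in\J z{\bar\Omega}\}$, which by Edwards' theorem is the largest $\PSH{\bar\Omega}$-minorant of $f$. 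Then $u\in\PSH{\bar\Omega}$, $-1\le u\le0$, and $u\le-1$ at the center of $B$, so $u$ is non-constant; and for $z\in\partial\Omega$ every $\mu\in\J z{\bar\Omega}$ is carried by $\partial\Omega$ by (2), where $f\equiv0$, so $\int f\,d\mu=0$ for all such $\mu$ and $u(z)=0$. Thus $u$ witnesses P-hyperconvexity.

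For $(1)\Rightarrow(4)$: if $z_0\in\Omega$, a small ball about $z_0$ lies in $\Omega$ and is trivially P-hyperconvex. If $z_0\in\partial\Omega$, take $V_{z_0}=B(z_0,r)$, set $\psi(w)=|w-z_0|^2-r^2$, and consider $g=\max(u,\psi)$ restricted to $\overline{V_{z_0}\cap\Omega}$, where $u$ is the witness of (1). Then $g\in\PSH{\overline{V_{z_0}\cap\Omega}}$, $g\le0$, and $g$ vanishes on $\partial(V_{z_0}\cap\Omega)=(\partial V_{z_0}\cap\bar\Omega)\cup(\bar V_{z_0}\cap\partial\Omega)$ — on the first piece since there $\psi=0\ge u$, on the second since there $u=0\ge\psi$ — while $g<0$ at every point of $V_{z_0}\cap\Omega$, where both $u<0$ and $\psi<0$. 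Hence $V_{z_0}\cap\Omega$ is P-hyperconvex.

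The implication $(4)\Rightarrow(2)$ is the heart of the proof. Having $(1)\Leftrightarrow(2)$ and applying it to the P-hyperconvex domains $V_{z_0}\cap\Omega$, the task reduces to showing that condition (2) is \emph{local at the boundary}. Given $z\in\partial\Omega$ and $\mu\in\J z{\bar\Omega}$, one picks $B=B(z,r)$ with $B\cap\Omega$ P-hyperconvex and performs a balayage: the part of $\mu$ lying outside $\bar B$ is swept onto $\partial B\cap\bar\Omega$, producing $\nu\in\J z{\overline{B\cap\Omega}}$ with $\nu\ge\mu|_{B\cap\Omega}$; since (2) holds for $B\cap\Omega$, $\nu$ is carried by $\partial(B\cap\Omega)$, which forces $\mu$ to charge no part of $B\cap\Omega$. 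The delicate points — constructing the swept measure and verifying it is still a Jensen measure, and then bootstrapping this local conclusion to the global statement $\mu(\Omega)=0$ — are where I expect the bulk of the technical effort; by comparison, the envelope and maximum-principle manipulations above are routine.
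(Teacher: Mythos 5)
Your steps $(1)\Rightarrow(3)\Rightarrow(2)$ and $(1)\Rightarrow(4)$ are correct and essentially agree with the paper (the maximum-principle observation that any witness is strictly negative on $\Omega$ is exactly what the paper uses tacitly). The two remaining implications, however, each contain a genuine gap. In $(2)\Rightarrow(1)$ you need the upper semicontinuous regularization $S^*$ of the Edwards envelope of $-\chi_{\bar B}$ to lie in $\PSH{\bar\Omega}$, and you also need it to be \emph{continuous}, since P-hyperconvexity asks for an exhaustion in $\PSH{\bar\Omega}\cap C(\bar\Omega)$. Neither is automatic for the compact-set class: the pointwise supremum of a family of functions in $\PSH{\bar\Omega}$ satisfies the Jensen inequalities but need not be upper semicontinuous, and there is no a priori reason the inequalities survive regularization; the paper only handles such envelopes (in the proof of Theorem \ref{thm:b}) \emph{after} P-hyperconvexity is established, via Theorem \ref{thm:c}, so appealing to that mechanism here would be circular. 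The paper's route is shorter and avoids both issues: condition (2) implies the hypothesis of the Carlehed--Cegrell--Wikstr\"om criterion (Theorem \ref{thm:jensen_supp}), so $\Omega$ is hyperconvex and carries a continuous negative exhaustion $\psi\in\PSH{\Omega}\cap C(\bar\Omega)$; then Theorem \ref{thm:bvalues} together with (2) (which gives $\int\psi\,d\mu=0=\psi(z)$ for every $z\in\partial\Omega$ and $\mu\in\J{z}{\bar\Omega}$) shows $\psi\in\PSH{\bar\Omega}$.

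The second and larger gap is $(4)\Rightarrow(2)$, which you rightly call the heart of the proof but do not actually carry out. The sweeping step --- producing from $\mu\in\J{z}{\bar\Omega}$ a measure $\nu\in\J{z}{\overline{B\cap\Omega}}$ with $\nu\ge\mu|_{B\cap\Omega}$ --- is a localization principle for Jensen measures on compact sets that is nowhere available in the paper and is not routine: these Jensen measures are defined by duality against $\PSHo{\cdot}$, not by subordination along analytic discs, so there is no ready-made balayage or stopping-time construction. Moreover, even granting it, you would only conclude $\mu(B\cap\Omega)=0$ for a small ball $B$ about the base point $z$, whereas (2) requires $\mu(\Omega)=0$; a measure in $\J{z}{\bar\Omega}$ may charge parts of $\Omega$ far from $z$, and no bootstrap is indicated. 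The paper sidesteps both difficulties by proving $(4)\Rightarrow(3)$ instead: it takes the continuous exhaustion $\psi$ furnished by Theorem \ref{thm:hx_lokal} and verifies $\psi\in\PSH{\bar\Omega}$ by Gauthier's localization theorem (Theorem \ref{thm:Loc}), which reduces the matter to the Jensen measures of the small compacts $\overline{\Omega\cap B_{z_0}}$; these are controlled directly by the local exhaustion functions supplied by (4), together with the elementary fact that $\J{z}{\overline{\Omega\cap B_{z_0}}}=\{\delta_z\}$ for $z\in\Omega\cap\partial B_{z_0}$. To complete your version you would have to either import Gauthier's theorem or give a full proof of the Jensen-measure localization plus the globalization step.
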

It turns out that this is exactly the condition we have been looking for, and we can show the following theorems.
 \begin{mainthm}\label{thm:c}
 Suppose that  $\Omega$ is a P-hyperconvex domain and that $u$ is plurisubharmonic on $\Omega$ and upper semicontinuous on $\bar\Omega$. If there is a function $v \in \PSH{\bar\Omega}$ such that $u|_{\partial \Omega}=v|_{\partial \Omega}$, then $u\in \PSH{\bar\Omega}$.
 \end{mainthm}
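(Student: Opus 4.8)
The plan is to verify the defining property of $\PSH{\bar\Omega}$ directly: since $u$ is assumed upper semicontinuous on $\bar\Omega$, it remains to show that $u(z)\le\int u\,d\mu$ for every $z\in\bar\Omega$ and every $\mu\in\J{z}{\bar\Omega}$. I would split the argument according to whether $z$ lies in $\partial\Omega$ or in $\Omega$.

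The boundary case is immediate from Theorem~\ref{thm:a}. If $z\in\partial\Omega$ and $\mu\in\J{z}{\bar\Omega}$, then by the implication $(1)\Rightarrow(2)$ we have $\supp(\mu)\subset\partial\Omega$; since $v\in\PSH{\bar\Omega}$ gives $v(z)\le\int v\,d\mu$, and since $u$ and $v$ agree on $\partial\Omega$ (hence on $\supp(\mu)$) and at $z$,
\[
 u(z)=v(z)\le\int v\,d\mu=\int u\,d\mu .
\]
So P-hyperconvexity of $\Omega$, via Theorem~\ref{thm:a}, together with the hypothesis on $v$, disposes of all boundary points. A useful byproduct, obtained by the same kind of gluing, is that $\max(u,v)\in\PSH{\bar\Omega}$: it is plurisubharmonic on $\Omega$, and if $v_{j}$ are plurisubharmonic on neighbourhoods $U_{j}$ of $\bar\Omega$ decreasing to $v$ (available by \cite{CHP}), then the functions equal to $\max(u,v_{j})$ on $\Omega$ and to $v_{j}$ on $U_{j}\setminus\Omega$ are plurisubharmonic on $U_{j}$ --- the pasting being licit because $\limsup_{\Omega\ni x\to\xi}u(x)\le u(\xi)=v(\xi)\le v_{j}(\xi)$ for $\xi\in\partial\Omega$ --- and they decrease to $\max(u,v)$, so $\max(u,v)\in\PSH{\bar\Omega}$ again by \cite{CHP}.

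For the interior points I would pass to the plurisubharmonic envelope $U:=\sup\{\phi\in\PSH{\bar\Omega}:\phi\le u\text{ on }\bar\Omega\}$ and its upper regularization $U^{*}$, so that $U^{*}\in\PSH{\bar\Omega}$ and $U^{*}\le u$ on $\bar\Omega$; the theorem is equivalent to $U^{*}=u$. By Edwards' duality, $U(z)=\inf\{\int u\,d\mu:\mu\in\J{z}{\bar\Omega}\}$ for each $z\in\bar\Omega$, so the already-proved boundary inequalities yield $U=u$, and hence $U^{*}=u$, on $\partial\Omega$. It then remains to prove $U^{*}=u$ on $\Omega$, and here the idea is that the non-contact set $D:=\{z\in\Omega:U^{*}(z)<u(z)\}$ must be empty. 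On $D$ the envelope $U^{*}$ is maximal --- one can push $U^{*}$ upward inside balls relatively compact in $\Omega$ without leaving $\PSH{\bar\Omega}$, since such a modification leaves $U^{*}$ unchanged near $\partial\Omega$ --- while $u$ is plurisubharmonic on $D$ with $u>U^{*}$ there and $u=U^{*}$ on $\partial D$, on $\partial D\cap\Omega$ by the definition of $D$ together with $U^{*}\le u$, and on $\partial D\cap\partial\Omega$ by the previous paragraph. The comparison principle then forces $u\le U^{*}$ on $D$, which contradicts $u>U^{*}$ on $D$ unless $D=\emptyset$.

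The main obstacle I anticipate is precisely this interior step --- making the envelope $U^{*}$ catch up with $u$ away from $\partial\Omega$. Two points need care. First, one must know that $U^{*}$ is genuinely maximal on $\{U^{*}<u\}$ when $u$ is merely upper semicontinuous rather than continuous; this is where the hypothesis that $u$ is plurisubharmonic on $\Omega$ is used, so that $u$ is not too irregular in the interior and the usual balayage argument for envelopes goes through. Second, one must check that the boundary values of $u$ and $U^{*}$ along $\partial D$ are close enough, in the appropriate $\limsup$ sense, for the comparison principle to apply. It is the boundary equality $U^{*}=u$ on $\partial\Omega$ that really uses P-hyperconvexity through Theorem~\ref{thm:a}; in a merely strongly hyperconvex domain this already fails, which is exactly the mechanism behind G\"og\"us's example~\cite[Example~7.3]{Go}, and once the boundary is settled the interior is a local, maximality-based argument.
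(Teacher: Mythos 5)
Your first paragraph is exactly the paper's proof: Theorem \ref{thm:bvalues} reduces membership of a function in $\PSH{\Omega}\cap\USC{\bar\Omega}$ to the class $\PSH{\bar\Omega}$ to the Jensen inequality at \emph{boundary} points only, and your verification of that inequality via Theorem \ref{thm:a}, (1)$\Rightarrow$(2), together with $u|_{\partial\Omega}=v|_{\partial\Omega}$, is precisely how the paper concludes. The gaps are all in your attempt to redo the interior reduction by hand. First, Edwards' theorem as stated (and proved) computes the envelope of a \emph{lower} semicontinuous function; $u$ is only upper semicontinuous, and the inequality you need, namely $U(z)\geq\inf_{\mu}\int u\,d\mu$, is exactly the nontrivial direction that uses lower semicontinuity (the reverse inequality is the trivial one), so the identity $U=u$ on $\partial\Omega$ is not established. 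Second, the claim that $U^{*}\in\PSH{\bar\Omega}$ --- that the upper regularization of a supremum of functions in $\PSH{\bar\Omega}$ stays in $\PSH{\bar\Omega}$ --- is not available at this stage; in the paper the corresponding fact about envelopes (in the proof of Theorem \ref{thm:b}) is \emph{deduced from} Theorem \ref{thm:c}, so invoking it here is circular. Third, since $u$ is merely upper semicontinuous, the set $D=\{U^{*}<u\}$ need not be open, and maximality of $U^{*}$ on $D$ only controls competitors lying below $U^{*}$ off a compact subset of $D$; near $\partial D\cap\partial\Omega$ you have no such bound, so the comparison step is not justified.

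Ironically, your ``byproduct'' already contains a complete proof that avoids all of this. Let $\psi\in\PSH{\bar\Omega}\cap C(\bar\Omega)$ be the exhaustion function furnished by P-hyperconvexity, so that $\psi<0$ on $\Omega$ and $\psi=0$ on $\partial\Omega$. For each $K>0$ the function $v+K\psi$ belongs to $\PSH{\bar\Omega}$ and still agrees with $u$ on $\partial\Omega$, so your gluing argument, run with decreasing approximants of $v+K\psi$ supplied by Theorem \ref{thm:approx}, shows that $w_{K}:=\max\{u,v+K\psi\}\in\PSH{\bar\Omega}$. As $K\to\infty$ the functions $w_{K}$ decrease pointwise on $\bar\Omega$ to $u$, since $v+K\psi\to-\infty$ on $\Omega$ while $w_{K}=u$ on $\partial\Omega$ for every $K$; Example \ref{ex:monotone_seq} then gives $u\in\PSH{\bar\Omega}$. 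So either quote Theorem \ref{thm:bvalues} after your first paragraph, or make this one-line upgrade of your gluing; as written, the interior step does not go through.
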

 \begin{mainthm}\label{thm:mainapprox}
Suppose that $\Omega$ is P-hyperconvex and that $u \in \PSH{\Omega}$. For every relatively compact set $E \Subset \Omega$, there is a sequence $\{u_j\}$ of functions smooth and plurisubharmonic in neighborhoods of $\bar\Omega$ such that $u_j(z) \searrow u(z)$ for every $z \in E$. Moreover, if $u$ is bounded from above, one can chose $E:=\Omega$.
\end{mainthm}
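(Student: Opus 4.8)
The plan is to prove Theorem~\ref{thm:mainapprox} by exploiting the duality between outside approximation and plurisubharmonicity on the compact set $\bar\Omega$, using Theorem~\ref{thm:c} as the bridge. First I would recall the basic dictionary: a function $w$ on $\bar\Omega$ is a decreasing limit of functions plurisubharmonic on neighborhoods of $\bar\Omega$ precisely when $w$ is plurisubharmonic on the compact $\bar\Omega$ in the sense of Poletsky, i.e. $w(z)\le\int w\,d\mu$ for all $z\in\bar\Omega$ and all $\mu\in\J{z}{\bar\Omega}$; and by a standard regularization argument (convolution away from the boundary, or Richberg-type smoothing of the approximants) one may upgrade ``plurisubharmonic on a neighborhood'' to ``smooth and plurisubharmonic on a neighborhood.'' So the problem reduces to: given $u\in\PSH{\Omega}$ with $\Omega$ P-hyperconvex, produce a function $\tilde u\in\PSH{\bar\Omega}$ that agrees with $u$ on $E$ (or on all of $\Omega$ when $u$ is bounded above).

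The core construction handles the bounded-above case first, where we take $E:=\Omega$. Here the idea is to paste $u$ with a boundary value. Pick a negative bounded exhaustion-type function $\rho\in\PSH{\bar\Omega}$ with $\rho=0$ on $\partial\Omega$, which exists by P-hyperconvexity (Theorem~\ref{thm:a}(1) gives a non-constant $\rho\in\PSH{\bar\Omega}$ with $\rho|_{\partial\Omega}=0$; after subtracting its max we may assume $\rho\le 0$, and it is automatically $<0$ on $\Omega$ by the maximum principle for plurisubharmonic functions on compacts). Normalizing $u\le 0$ as well, I would set, for a large constant $A>0$,
\[
\tilde u := \max\{u,\ A\rho + c\}\quad\text{on }\Omega,\qquad \tilde u := \text{(the obvious continuous extension)}\ \text{on }\partial\Omega,
\]
and argue that for $A$ large enough $A\rho+c$ dominates $u$ near $\partial\Omega$, so $\tilde u$ equals $A\rho+c$ in a neighborhood of $\partial\Omega$ in $\bar\Omega$ and equals $u$ on a fixed relatively compact piece — in particular $\tilde u$ is upper semicontinuous on $\bar\Omega$, plurisubharmonic on $\Omega$, and its boundary values coincide with those of the plurisubharmonic-on-$\bar\Omega$ function $A\rho+c$. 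Theorem~\ref{thm:c} then yields $\tilde u\in\PSH{\bar\Omega}$, hence $\tilde u$ is a decreasing limit of smooth plurisubharmonic functions on neighborhoods of $\bar\Omega$, and since $\tilde u=u$ on $\Omega$ (when $u$ is bounded above one can always arrange $A\rho+c\le u$ off a neighborhood of the boundary by taking $A$ large, because $\rho$ is bounded and $u$ is bounded below on compact subsets only — this is the delicate point, see below), we are done in this case.

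For the general case, where $u\in\PSH{\Omega}$ may be unbounded and only $E\Subset\Omega$ is required, I would fix $E$ and truncate: replace $u$ by $\max\{u,-M\}$ for $M$ chosen so large that $\max\{u,-M\}=u$ on $E$ — possible since $u$ is bounded below on the compact set $E$ — and then apply the bounded-above case (after also truncating from above, which is harmless since approximation from outside of $\min\{u,N\}$ gives approximation of $u$ on $E$ for $N$ exceeding $\sup_E u$) to this bounded function. The resulting smooth plurisubharmonic approximants decrease to a function equal to $u$ on $E$. The main obstacle I anticipate is precisely the gluing estimate in the second paragraph: ensuring that the pasted function $\tilde u$ genuinely coincides with $u$ on the prescribed set while simultaneously matching a plurisubharmonic-on-$\bar\Omega$ function near $\partial\Omega$. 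When $u$ is only bounded above, $u$ may tend to $-\infty$ at the boundary, so ``$A\rho+c$ dominates $u$ near $\partial\Omega$'' is not automatic and must be arranged by a more careful choice — e.g. comparing $u$ with $A\rho$ via the comparison/domination principle for plurisubharmonic functions on the compact $\bar\Omega$, or by first observing that $u$ has a plurisubharmonic-on-$\bar\Omega$ minorant built from $\rho$ using P-hyperconvexity of the local pieces $V_z\cap\Omega$ (Theorem~\ref{thm:a}(4)). Getting this localization-and-gluing step right, so that Theorem~\ref{thm:c} applies cleanly, is the heart of the argument; the passage from ``plurisubharmonic on $\bar\Omega$'' to ``smooth plurisubharmonic on neighborhoods'' and the truncation reductions are routine.
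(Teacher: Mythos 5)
Your overall strategy---reduce, via the decreasing-approximation characterization of $\PSH{\bar\Omega}$ (Theorem~\ref{thm:approx} plus smoothing), to producing a function in $\PSH{\bar\Omega}$ agreeing with $u$ on the prescribed set, and then obtain membership in $\PSH{\bar\Omega}$ from Theorem~\ref{thm:c} by arranging boundary values that match a known element of $\PSH{\bar\Omega}$---is exactly the paper's. But both of your concrete reductions have genuine gaps. In the bounded-above case your pasted function $\tilde u=\max\{u,A\rho+c\}$ equals $u$ only on the set where $u\geq A\rho+c$; since $A\rho+c$ is bounded while $u$ may be unbounded below in the interior (logarithmic poles, for instance), no choice of $A$ and $c$ gives $\tilde u=u$ on all of $\Omega$, and the fixes you gesture at (a minorant of $u$ of the form $A\rho+c$, or a comparison principle) cannot exist for such $u$. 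The point you are missing is that Theorem~\ref{thm:c} requires no pasting at all here: the paper simply extends $u$ to $\bar\Omega$ by declaring $\tilde u:=\sup u$ on $\partial\Omega$. This extension is automatically upper semicontinuous on $\bar\Omega$, plurisubharmonic on $\Omega$, and its boundary restriction is the constant $\sup u$, which trivially coincides with a function in $\PSH{\bar\Omega}$; Theorem~\ref{thm:c} then gives $\tilde u\in\PSH{\bar\Omega}$ with $\tilde u=u$ on all of $\Omega$.

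Your reduction of the general case is also broken: $\min\{u,N\}$ is in general not plurisubharmonic (already $\min\{\operatorname{Re}z,0\}$ violates the sub-mean-value inequality at the origin), so it is not a decreasing limit of functions in $\PSHo{\bar\Omega}$ and the bounded-above case cannot be applied to it; and $\max\{u,-M\}$ alone need not be bounded above on $\Omega$. The paper avoids both problems by localizing the pasting to a sublevel set of the exhaustion: with $\psi\in\PSH{\bar\Omega}\cap C(\bar\Omega)$ the negative exhaustion, choose $\epsilon>0$ with $E\subset\Omega_\epsilon:=\{\psi<-\epsilon\}$, note that $u$ is automatically bounded above on $\Omega_\epsilon$ by some $M$ (upper semicontinuity on the compact $\bar\Omega_\epsilon$), and set $\tilde u:=\max\{K(\psi+\epsilon),u-M\}$ on $\Omega_\epsilon$ and $\tilde u:=K(\psi+\epsilon)$ on $\Omega\setminus\Omega_\epsilon$, with $K>0$ so large that $K(\psi+\epsilon)<u-M$ on $E$. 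Since $K(\psi+\epsilon)=0>u-M$ near $\partial\Omega_\epsilon$ the gluing is plurisubharmonic on $\Omega$, it equals $u-M$ on $E$, and it has constant boundary values $K\epsilon$, so Theorem~\ref{thm:c} applies and one shifts back by $M$. You should rework both constructions along these lines; the surrounding framework of your argument is otherwise correct.
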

\begin{mainthm}\label{thm:b}
	Suppose that $\Omega$ is a P-hyperconvex domain. Then the Dirichlet problem
	\[
	\begin{cases}
		v \mbox{ is plurisubharmonic on } \bar\Omega; \\
		v=f \mbox{, on } \partial \Omega.
		\end{cases}
\]
can be solved if and only if $f \in \PSH{\partial \Omega}$.

Moreover, if $f \in C(\partial \Omega)$, then $v$ can be chosen to belong to $C(\bar\Omega)$.
\end{mainthm}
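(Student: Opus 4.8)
\emph{Strategy.} The forward implication is soft; the reverse one carries all the content, and is the only place where P‑hyperconvexity (through Theorem~\ref{thm:a}) is used.

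\emph{Necessity.} Suppose $v\in\PSH{\bar\Omega}$ and $v|_{\partial\Omega}=f$. For $z\in\partial\Omega$ and $\mu\in\J{z}{\partial\Omega}$, the measure $\mu$ is a fortiori a Jensen measure for $\bar\Omega$ at $z$ (it has to pass fewer test functions), and it is carried by $\partial\Omega$; hence $f(z)=v(z)\le\int v\,d\mu=\int f\,d\mu$. Thus $f\in\PSH{\partial\Omega}$, and nothing about the geometry of $\Omega$ was used.

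\emph{Sufficiency.} Let $f\in\PSH{\partial\Omega}$, and let $v=\sup\{\phi\in\PSH{\bar\Omega}:\phi|_{\partial\Omega}\le f\}$ with upper semicontinuous regularization $v^{*}$. Recalling that $f$ is a decreasing limit of continuous functions in $\PSH{\partial\Omega}$, monotone convergence against Jensen measures reduces the statement — including the ``moreover'' — to $f\in C(\partial\Omega)\cap\PSH{\partial\Omega}$: if each term of a decreasing sequence $f_k\searrow f$ is matched by some $v_k\in\PSH{\bar\Omega}$ (respectively $v_k\in C(\bar\Omega)\cap\PSH{\bar\Omega}$), then $v:=\lim_k v_k$ lies in $\PSH{\bar\Omega}$ and has boundary values $f$. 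So assume $f$ continuous. The defining family is nonempty (constants below $\min_{\partial\Omega}f$) and, by the maximum principle for plurisubharmonic functions on $\Omega$, bounded above by $\max_{\partial\Omega}f$; in particular $v^{*}$ is plurisubharmonic on $\Omega$ and upper semicontinuous on $\bar\Omega$. For the boundary values, $v\le f$ on $\partial\Omega$ is trivial; writing $f$ as a decreasing — hence, by Dini, uniform — limit on $\partial\Omega$ of functions $g_j$ plurisubharmonic near $\partial\Omega$ and gluing $g_j-\varepsilon_j$ (with $\varepsilon_j=\sup_{\partial\Omega}\lvert g_j-f\rvert$) to a sufficiently negative constant, one obtains competitors $\tilde g_j\in\PSH{\bar\Omega}$ with $\tilde g_j\to f$ uniformly on $\partial\Omega$, so $v\ge f$ on $\partial\Omega$; and comparison with the continuous classical Dirichlet solution $w$ on the hyperconvex domain $\Omega$ (each competitor, restricted to $\Omega$, being admissible for $w$) gives $v\le v^{*}\le w$, so $v^{*}=f$ on $\partial\Omega$. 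When $f\in C(\partial\Omega)$, continuity of $v^{*}$ then follows by a standard barrier argument, with the $\tilde g_j$ as sub‑solutions near $\partial\Omega$ and $w$ as a continuous super‑solution.

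\emph{The main obstacle.} What remains, and what is not routine, is to show that the envelope does not leave the class, i.e. $v^{*}\in\PSH{\bar\Omega}$. That $v^{*}$ is plurisubharmonic on $\Omega$, upper semicontinuous on $\bar\Omega$, and that the negligible set $\{v^{*}>v\}$ is pluripolar, are standard; the sub‑mean‑value inequality with respect to Jensen measures \emph{at points of} $\partial\Omega$ is exactly what P‑hyperconvexity secures. Here one invokes Theorem~\ref{thm:a}: by~(2), every $\mu\in\J{z}{\bar\Omega}$ with $z\in\partial\Omega$ is supported on $\partial\Omega$, and together with the localization~(4) and the pluripolarity of $\{v^{*}>v\}$ this upgrades the inequality $v(z)\le\int v^{*}\,d\mu$ — valid because each competitor satisfies it — to $v^{*}(z)\le\int v^{*}\,d\mu$. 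This is precisely the step that fails for merely hyperconvex domains, as in the example of G\"og\"us recalled in the introduction, which explains why the stronger regularity is needed; once it is available, $v:=v^{*}$ solves the Dirichlet problem.
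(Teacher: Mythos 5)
Your skeleton is essentially the paper's: the necessity argument is correct (and you rightly note it uses only the easy inclusion $\J{z}{\partial\Omega}\subset\J{z}{\bar\Omega}$, not P-hyperconvexity), and for sufficiency the paper likewise takes an upper envelope over $\PSH{\bar\Omega}$ (below the harmonic extension of approximants $f_j\searrow f$), reduces to continuous data by monotone approximation, and exploits the fact that Jensen measures of boundary points are carried by $\partial\Omega$. However, the step you isolate as ``the main obstacle'' is closed with the wrong tools. To deduce $v^*\in\PSH{\bar\Omega}$ from the sub-mean-value inequality at boundary points \emph{only}, you need Theorem \ref{thm:bvalues} of the paper (for $\phi\in\PSH{\Omega}\cap\USC{\bar\Omega}$ it suffices to test against $\J{z}{\bar\Omega}$ with $z\in\partial\Omega$); that is a genuine theorem with a nontrivial proof, and neither item (4) of Theorem \ref{thm:a} (a statement about the domain, not about functions) nor pluripolarity of $\{v^*>v\}$ replaces it --- Jensen measures at \emph{interior} points of $\Omega$ may charge large interior sets, so negligibility of the discrepancy set is beside the point. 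With Theorem \ref{thm:bvalues} in hand your boundary computation does go through: there $v^*=f$, each $\mu\in\J{z}{\bar\Omega}$ lives on $\partial\Omega$, and every competitor $\phi$ gives $\phi(z)\le\int\phi\,d\mu\le\int f\,d\mu$, whence $f(z)\le\int f\,d\mu$.

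Two further gaps. First, ``gluing $g_j-\epsilon_j$ to a sufficiently negative constant'' does not produce a member of $\PSH{\bar\Omega}$: since $g_j-\epsilon_j$ stays near $f$ on the collar where it is defined, no choice of constant makes the two pieces match, and the naively glued function is not even upper semicontinuous. One must bridge with the exhaustion, e.g.\ $\max\{g_j-\epsilon_j+K\psi,\,-M\}$ with $\psi\in\PSH{\bar\Omega}\cap C(\bar\Omega)$ the P-hyperconvex exhaustion and $K$ large, and then verify membership in $\PSH{\bar\Omega}$ by approximating $\psi$ from $\PSHo{\bar\Omega}$; this is exactly the content of the paper's Lemma \ref{lem:psho_pshutv} (which additionally needs a strictly plurisubharmonic exhaustion, via Kerzman--Rosay, to absorb a cutoff). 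Second, the ``moreover'' clause is not proved. A barrier argument yields continuity of $v^*$ at points of $\partial\Omega$ only; continuity \emph{inside} $\Omega$ is Walsh's theorem, which is not free (and which the paper deliberately avoids). The paper instead obtains lower semicontinuity of the envelope from Edwards' theorem --- the supremum over $\PSH{\bar\Omega}$ coincides with the supremum over $\PSH{\bar\Omega}\cap C(\bar\Omega)$ --- and pairs it with the identity $\Phi_j=\Phi_j^*$. Your proposal establishes neither the lower semicontinuity of $v$ nor the equality $v=v^*$, so continuity of the solution remains unjustified.
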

Here a few words about the different theorems is in order. Theorem \ref{thm:c} says that on P-hyperconvex domains, the question of whether a plurisubharmonic function on domain extends to plurisubharmonic function on the closure of the domain, is determined by its boundary values. This property characterizes P-hyperconvexity among hyperconvex domains, and is one of the reasons why P-hyperconvex domains are natural to work with. Theorem \ref{thm:mainapprox} is a direct consequence of this fact and says that if one is willing to sacrifice a neighborhood of the boundary, all plurisubharmonic functions can be approximated from the outside by continuous plurisubharmonic functions.

Finally Theorem \ref{thm:b} can be interpreted in two interesting ways. On one hand, it characterises the solutions to a Dirichlet type problem for plurisubharmonic functions on compact sets, and should be compared to the Dirichlet problems studied by Poletsky and Sigurdsson \cite{PS}. On the other hand, one should note that in case one puts some mild regularity assuptions on $\bar \Omega$ (see \cite{H} and \cite{AHP}), every continuous function on $\bar\Omega$ that is plurisubharmonic function on $\Omega$ can be uniformly approximated by continuous plurisubharmonic functions defined on neighborhoods of $\bar\Omega$. In this case this theorem fully characterises the boundary values of continous, plurisubharmonic functions. This question is intimetely connected to the inhomogenous Dirichlet problem for the complex Monge-Amp\`{e}re equation, and has  been studied by  for example \AA{}hag, Czy\.z, Lodin and Wikstr\"om \cite{A}, B\l{}ocki \cite{Bl}, Sadullaev \cite{Sa} and Wikstr\"om \cite{W}.
Combining Theorem \ref{thm:c} and Theorem \ref{thm:b}, gives a natural characterization of the approxible plurisubharmonic functions on P-hyperconvex domains. 
\begin{mainthm}\label{thm:e}
	Let $\Omega$ be a P-hyperconvex domain and suppose that $u \in \PSH{\Omega}$ is upper semicontinuous on $\bar\Omega$. Then $u \in \PSH{\bar\Omega}$ if and only if $u|_{\partial \Omega}\in \PSH{\partial\Omega}$.
\end{mainthm}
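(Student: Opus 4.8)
The plan is to derive Theorem~\ref{thm:e} by combining the two halves already established in Theorems~\ref{thm:c} and~\ref{thm:b}, so the work reduces to tracking boundary data back and forth. One direction is essentially immediate: if $u \in \PSH{\bar\Omega}$, then its restriction $u|_{\partial\Omega}$ is plurisubharmonic on the compact set $\partial\Omega$, because a Jensen measure $\mu \in \J{z}{\partial\Omega}$ for a point $z \in \partial\Omega$ is in particular a Jensen measure in $\J{z}{\bar\Omega}$ (every function in $\PSH{\bar\Omega}$ restricts to a function in $\PSH{\partial\Omega}$, so the defining inequalities are inherited); hence $u(z) = u|_{\partial\Omega}(z) \le \int u\, d\mu = \int u|_{\partial\Omega}\, d\mu$. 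This gives $u|_{\partial\Omega} \in \PSH{\partial\Omega}$ with no use of P-hyperconvexity at all.

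For the converse, suppose $u \in \PSH{\Omega}$ is upper semicontinuous on $\bar\Omega$ and $f := u|_{\partial\Omega} \in \PSH{\partial\Omega}$. Since $\Omega$ is P-hyperconvex, Theorem~\ref{thm:b} applies to the boundary function $f$: the Dirichlet problem is solvable, so there exists $v \in \PSH{\bar\Omega}$ with $v|_{\partial\Omega} = f = u|_{\partial\Omega}$. Now $u$ is plurisubharmonic on $\Omega$, upper semicontinuous on $\bar\Omega$, and agrees on $\partial\Omega$ with a function $v \in \PSH{\bar\Omega}$; these are exactly the hypotheses of Theorem~\ref{thm:c}, whose conclusion is $u \in \PSH{\bar\Omega}$. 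This closes the equivalence.

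The only subtlety to check is that the restriction map $\PSH{\bar\Omega} \to \PSH{\partial\Omega}$ is well defined, i.e. that membership in $\J{z}{\partial\Omega}$ for $z\in\partial\Omega$ implies membership in $\J{z}{\bar\Omega}$ — this follows directly from the definition of Jensen measures on compact sets, since $\PSH{\bar\Omega}\big|_{\partial\Omega} \subseteq \PSH{\partial\Omega}$ and the Jensen inequality is tested against a smaller (or equal) class of functions when the ambient compact shrinks. I do not anticipate a genuine obstacle here: all the analytic content has been pushed into Theorems~\ref{thm:c} and~\ref{thm:b}, and Theorem~\ref{thm:e} is the clean packaging of their interplay. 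If one wants the statement to record that the extension can be taken continuous when $f \in C(\partial\Omega)$, that too is already contained in the ``moreover'' clause of Theorem~\ref{thm:b}, though it is not asserted in Theorem~\ref{thm:e} as stated.
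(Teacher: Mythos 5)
Your proposal is correct and takes essentially the same route as the paper, which obtains Theorem~\ref{thm:e} precisely by combining Theorem~\ref{thm:c} with Theorem~\ref{thm:b}, the forward direction resting on the elementary inclusion $\J{z}{\partial\Omega}\subseteq\J{z}{\bar\Omega}$ for $z\in\partial\Omega$. One cosmetic remark: that inclusion is most directly justified by $\PSHo{\bar\Omega}|_{\partial\Omega}\subseteq\PSHo{\partial\Omega}$ (the class actually used to define the Jensen measures), rather than the corresponding statement for $\PSH{\cdot}$, though the two are interchangeable here since both classes generate the same Jensen measures.
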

\section{Notation}
Throughout this paper,  $\Omega$ denotes a bounded domain, that is a bounded, connected and open set, in $\Cn$ and $X$ denotes a compact set in $\Cn$. Given a set $E$, we denote by $E^\circ$ its interior and by $\bar E$ its closure.

As usual we will denote the set of continuous functions on $\Omega$ by $C(\Omega)$ and the set of plurisubharmonic functions on $\Omega$ by  $\PSH{\Omega}$. Later, the set of plurisubharmonic functions on the compact set $X$ will also be denoted by $\PSH{X}$, and the meaning of $\PSH{E}$ will depend on the context. 
The set of upper semicontinuous on $X$ will be denoted by $\USC{X}$. With a measure, we always mean a positive, regular Borel measure  and given a such a measure, $\mu$, we denote by $\supp(\mu)$, the support of $\mu$. The Dirac measure at the point $z$ will be denoted by $\delta_z$. 

\section{Plurisubharmonicity on compact sets} \label{sec:ppsh}
In this section we define the notion of plurisubharmonicity on compact sets. We follow the work of Sibony \cite{Si}, but there is an equivalent characterization based on analytic disks, due to Poletsky (see \cite{PAG} and \cite{CHP}). We also study some properties of this class of functions and compare them with plurisubharmonic functions on open sets. For a good introduction to the properties of plurisubharmonic functions on open sets, see the monograph by Klimek \cite{K}. 

\begin{defin}
Let $u$ be a function on $X$. We say that $u \in \PSHo{X}$ if there exists $U \supset X$ open and a function $\tilde u \in \PSH{U} \cap C(U)$ such that $\tilde u|_X =u$.
\end{defin}

\begin{defin}\label{def:jens1}
	Let $\J{z}{X}$ be the set of all probability measures $\mu$ on $X$ such that
	\[
		u(z) \leq \int u\,d\mu, \quad \forall \ u \in \PSHo{X}.
	\]
\end{defin}

The set $\J z X$ is called the set of \emph{Jensen measures} (for $z$) and this set is always nonempty since the Dirac measure $\delta_z \in \J{z}{X}$. We are now ready to define what we mean with a plurisubharmonic function on a compact set.

\begin{defin}\label{def:psh}
	Let $u$ be an upper semicontinuous function on $X$. We say that $u$ is \emph{plurisubharmonic on $X$} if, for all $z \in X$, it holds that
	\begin{equation}\label{eq:pshineq}
		u(z) \leq \int u \,d\mu, \quad \forall \ \mu \in  \J{z}{X}.
	\end{equation}
	
	We denote the set of all plurisubharmonic functions on $X$ by $\PSH{X}$.
\end{defin}

\begin{rem}
By the definition, we see that $\PSHo{X} \subset \PSH{X}$, and we will later see that this inclusion is proper (see Example \ref{ex:psh_ext}).
\end{rem}

\begin{exa} \label{ex:pshinterior}
If $X$ is a compact set with non-empty interior, $X^\circ$, and $u \in \PSH{X^\circ}$, then $u \in \PSH{X^\circ}$.  This follows from the observation that the push-forward of the normalized arc-length measure by a complex line defines a Jensen measure.
\end{exa}

One of the main reasons for this definition is a duality theorem of D.A. Edwards which we now will describe. Suppose in the following that $X$ is a compact metric space and that $\mathcal F(X)$ is a cone of upper semicontinuous functions on $X$ that contains the constants. For every $z \in X$, we now define $\mathcal{M}^{\mathcal F}_z(X)$ as the set of all probability measures $\mu$ on $X$ such that
\[
	\phi(z) \leq \int \phi\, d\mu, \quad \forall \phi \in \mathcal{F}.
\]
\begin{thm}[Edwards' theorem, \cite{E}]
	Suppose that $X$, $\F(X)$ and $\M{z}{\mathcal F}{X}$ are as above, and that $\phi$ is a lower semicontinuous function on $X$. Then
	\[
	\sup\Bigl\{v(z): v\in \F(X), v \leq\phi\Bigr\} = \inf\left\{\int \phi\,d\mu: \mu \in \M{z}{\mathcal F}{X}\right\}.
	\]
\end{thm}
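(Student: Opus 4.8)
The plan is to prove the two inequalities separately: the easy inequality $\le$ directly, and the reverse inequality $\ge$ by a Hahn--Banach duality argument carried out first for continuous $\phi$ and then upgraded to lower semicontinuous $\phi$ by approximation. For the easy direction, if $v \in \F$ with $v \le \phi$ and $\mu \in \M{z}{\mathcal F}{X}$, then $v(z) \le \int v\,d\mu \le \int \phi\,d\mu$, the first step because $v \in \F$ and $\mu$ is a Jensen measure, the second because $v \le \phi$ and $\mu \ge 0$. Taking the supremum on the left and the infimum on the right gives $\le$.

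For the reverse inequality I would first assume $\phi \in C(X)$ and introduce the functional $q(f):=\sup\{v(z): v\in\F,\ v\le f\}$ on $C(X)$. Since $\F$ is a cone containing the constants, $q$ is positively homogeneous and superadditive (if $v\le f$ and $w\le g$ with $v,w\in\F$, then $v+w\in\F$ and $v+w\le f+g$), hence superlinear; moreover $\min_X f \le q(f)\le f(z)$, so $q$ is finite. Applying the Hahn--Banach theorem to the sublinear functional $-q$ produces, for each fixed $\phi$, a supporting linear functional, giving $q(\phi)=\min\{M(\phi): M \text{ linear on } C(X),\ M\ge q\}$.

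The key step is to show that every linear functional $M\ge q$ is represented by a measure in $\M{z}{\mathcal F}{X}$. Testing $M\ge q$ on the constants forces $M(1)=1$, and $M(f)\ge q(f)\ge 0$ for $f\ge 0$ (take $v\equiv 0$) shows $M$ is positive, so by the Riesz representation theorem $M$ is a probability measure. To verify the Jensen inequality for an arbitrary $\psi\in\F\cap\USC{X}$, write $\psi=\inf_k g_k$ with $g_k\in C(X)$ decreasing; since $\psi\le g_k$ and $\psi\in\F$, we get $M(g_k)\ge q(g_k)\ge\psi(z)$, and letting $k\to\infty$ by monotone convergence yields $\int\psi\,dM\ge\psi(z)$. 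Hence $M\in\M{z}{\mathcal F}{X}$, and therefore $q(\phi)=\min_{M\ge q}M(\phi)\ge\inf_{\mu\in\M{z}{\mathcal F}{X}}\int\phi\,d\mu$, which is the desired inequality in the continuous case.

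Finally, for lower semicontinuous $\phi$ I would write $\phi=\sup_k\psi_k$ with $\psi_k\in C(X)$ increasing, possible since $X$ is a compact metric space. The supremum side satisfies $\sup\{v(z): v\le\phi\}\ge\sup_k\sup\{v(z):v\le\psi_k\}$, so by the continuous case it suffices to prove the minimax exchange $\sup_k\inf_\mu\int\psi_k\,d\mu=\inf_{\mu\in\M{z}{\mathcal F}{X}}\int\phi\,d\mu$, and I expect this to be the main obstacle. I would resolve it using weak-$*$ compactness of $\M{z}{\mathcal F}{X}$: choosing near-optimal $\mu_k$ for each $\psi_k$ and passing to a weak-$*$ limit $\mu_\infty\in\M{z}{\mathcal F}{X}$, the monotonicity $\psi_j\le\psi_k$ for $k\ge j$ gives $\int\psi_j\,d\mu_\infty\le\sup_k\inf_\mu\int\psi_k\,d\mu$ for every $j$, and monotone convergence then bounds $\int\phi\,d\mu_\infty$ by the same quantity, forcing equality. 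Combining the two cases completes the proof.
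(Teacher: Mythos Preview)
The paper does not actually prove Edwards' theorem; it states the result with a citation to Edwards \cite{E} and directs the reader to Wikstr\"om \cite{W} for a proof in this setting. So there is no ``paper's own proof'' to compare against.

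That said, your argument is the standard Hahn--Banach proof and is essentially correct. A few small points worth tightening: when you assert $M(1)=1$, you need both $M(1)\ge q(1)=1$ and $M(1)\le 1$, the latter coming from $M(-1)\ge q(-1)=-1$; this is implicit but should be stated. In the last step you invoke that the weak-$*$ limit $\mu_\infty$ lies in $\M{z}{\mathcal F}{X}$; this is true but not completely automatic, since the defining inequalities involve upper semicontinuous integrands. One checks it by writing $\psi\in\F$ as a decreasing limit of continuous $g_k$, passing to the limit in $\int g_k\,d\mu_n\ge\psi(z)$ first in $n$ and then in $k$. With these clarifications, your proof is complete and matches the standard route found in the references the paper cites.
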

For a proof of the theorem in this setting, see \cite{W}.

In our work there are two standard choices for $\mathcal F$, $\PSHo{X}$ or $\PSH{X}$. In both cases it follows from the definition that $\M{z}{\mathcal F}{X}=\J{z}{X}$. Applying Edwards' theorem we get the following theorem.
\begin{thm}\label{thm:Edwards}
Let $\phi$ be a lower semicontinuous function on $X$. Then
	\begin{align*}
		\inf\left\{\int \phi \,d\mu: \mu \in \J{z}{X}\right\}
		&=\sup \biggl\{\psi(z): \psi \in \PSHo{X}, \psi \leq\phi \biggr\}\\
		&= \sup \biggl\{\psi(z): \psi \in \PSH{X}, \psi \leq\phi \biggr\}.
	\end{align*}
\end{thm}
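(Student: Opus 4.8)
The plan is to read this off directly from Edwards' theorem, applied separately to the two cones $\F(X) = \PSHo{X}$ and $\F(X) = \PSH{X}$. The only things that need to be checked are that Edwards' theorem is applicable in each case, and that in each case the associated class of measures $\M{z}{\F}{X}$ is exactly the class $\J{z}{X}$ of Jensen measures; once that is in place the two identities are literally the two instances of Edwards' conclusion.

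First I would check the hypotheses of Edwards' theorem. Since $X$ is a compact subset of $\Cn$, it is a compact metric space. Both $\PSHo{X}$ and $\PSH{X}$ consist of upper semicontinuous functions and contain the constants, by their very definitions. They are also cones: if $\tilde u_i \in \PSH{U_i} \cap C(U_i)$ restricts to $u_i$ on $X$ for $i=1,2$, then $\tilde u_1 + \tilde u_2$ and $c\,\tilde u_1$ (for $c>0$) are continuous and plurisubharmonic on the neighbourhood $U_1 \cap U_2$ of $X$, so $\PSHo{X}$ is closed under addition and multiplication by positive scalars; and the defining inequality \eqref{eq:pshineq} is preserved under these same operations by linearity of the integral, so $\PSH{X}$ is a cone as well.

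Next I would identify the measure classes. For $\F(X) = \PSHo{X}$ the identity $\M{z}{\F}{X} = \J{z}{X}$ is precisely Definition \ref{def:jens1}. For $\F(X) = \PSH{X}$, one inclusion is immediate from the inclusion $\PSHo{X} \subset \PSH{X}$: a measure testing correctly against every function in $\PSH{X}$ in particular tests correctly against every function in $\PSHo{X}$, so $\M{z}{\PSH{X}}{X} \subset \J{z}{X}$. For the reverse inclusion, take $\mu \in \J{z}{X}$ and $u \in \PSH{X}$; by Definition \ref{def:psh} this means exactly that $u(z) \le \int u\, d\mu$, hence $\mu \in \M{z}{\PSH{X}}{X}$. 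Thus $\M{z}{\PSH{X}}{X} = \J{z}{X}$ as well.

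Finally, applying Edwards' theorem with the given lower semicontinuous function $\phi$ to each of the two cones in turn yields the two claimed identities. There is no serious obstacle here; the only point requiring a little care is the equality $\M{z}{\PSH{X}}{X} = \J{z}{X}$, where one must keep in mind that membership in $\PSH{X}$ is itself \emph{defined} through the Jensen measures $\J{z}{X}$ — which is exactly why the reverse inclusion holds by definition rather than requiring a genuine argument.
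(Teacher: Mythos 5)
Your proposal is correct and follows exactly the paper's route: the paper likewise derives both identities by applying Edwards' theorem to the cones $\PSHo{X}$ and $\PSH{X}$ after observing that $\M{z}{\mathcal F}{X}=\J{z}{X}$ in both cases (the paper states this in one sentence; you supply the routine verifications). Nothing is missing.
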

With this powerful tool, one can prove the following theorem, which is of outmost importance for our work. 
\begin{thm}[\cite{CHP}] \label{thm:approx}
		Let $X \subset \C^n$ be compact and suppose that $u$ is an upper semicontinuous function on $X$. Then $u \in \PSH{X}$ if and only if there exist functions $u_j \in \PSHo{X}$ such that $u_j \searrow u$.
\end{thm}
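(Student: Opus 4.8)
The plan is to read off both implications from Edwards' theorem (Theorem~\ref{thm:Edwards}), together with the standard fact that an upper semicontinuous function on a compact metric space is a decreasing limit of continuous functions, and the basic stability properties of $\PSH{X}$. The implication ``$\Leftarrow$'' is routine: if $u_j\in\PSHo{X}$ and $u_j\searrow u$, then $u$ is upper semicontinuous, and for $z\in X$ and $\mu\in\J{z}{X}$ the inequalities $u_j(z)\le\int u_j\,d\mu$ pass to the limit (monotone convergence, the $u_j$ being dominated by the continuous function $u_1$) to give $u(z)\le\int u\,d\mu$, so $u\in\PSH{X}$.

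For ``$\Rightarrow$'', I would start from $u\in\PSH{X}$ and fix continuous functions $g_j$ on $X$ with $g_j>u$ and $g_j\searrow u$ (for instance $g_j(z)=2^{-j}+\sup_{w\in X}(u(w)-j|z-w|)$). Applying Edwards' theorem to the continuous, hence lower semicontinuous, function $g_j$ produces the plurisubharmonic envelope
\[
  E_j(z):=\sup\bigl\{\psi(z):\psi\in\PSHo{X},\ \psi\le g_j\bigr\}=\inf\Bigl\{\textstyle\int g_j\,d\mu:\mu\in\J{z}{X}\Bigr\},
\]
which, by the second equality in that theorem, also equals $\sup\{\psi(z):\psi\in\PSH{X},\ \psi\le g_j\}$. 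Since $u\in\PSH{X}$ and $u\le g_j$, for $\mu\in\J{z}{X}$ we have $u(z)\le\int u\,d\mu\le\int g_j\,d\mu$, hence $u\le E_j$; testing against $\delta_z$ gives $E_j\le g_j$; and since the defining families shrink with $j$, $E_j\searrow u$.

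The heart of the argument is to show that each $E_j$ is in fact \emph{continuous} on $X$. It is lower semicontinuous, being a supremum of continuous functions; on the other hand its upper regularization $E_j^{*}$ lies in $\PSH{X}$ by stability of $\PSH{X}$ under regularized suprema of families bounded above, and $E_j^{*}\le g_j$ since $g_j$ is continuous, so by the $\PSH{X}$-description of $E_j$ above we get $E_j^{*}\le E_j$; as $E_j\le E_j^{*}$ always, this forces $E_j=E_j^{*}$, which is therefore continuous. I expect this to be the main obstacle: it is exactly the place where one has to invoke the theory of $\PSH{X}$ itself (stability under regularized suprema, or equivalently Poletsky's description via analytic discs) rather than merely $\PSHo{X}$, and it is not a formal matter.

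Once $E_j$ is known to be continuous the rest is bookkeeping. The family $\{\psi\in\PSHo{X}:\psi\le g_j\}$ is directed upward, since a maximum of two functions in $\PSHo{X}$ is again in $\PSHo{X}$, and its pointwise supremum is the continuous function $E_j$, so Dini's theorem yields $\phi_j\in\PSHo{X}$ with $\phi_j\le g_j$ and $E_j-\phi_j<2^{-j-1}$ on $X$. Put $u_j:=\phi_j+2^{-j}\in\PSHo{X}$. Then $u_j>E_j+2^{-j-1}\ge u+2^{-j-1}$ and $u_j\le E_j+2^{-j}\le g_j+2^{-j}$, so $u_j\to u$ pointwise; and $u_{j+1}\le E_{j+1}+2^{-j-1}\le E_j+2^{-j-1}<u_j$, so $\{u_j\}$ is decreasing. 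Hence $u_j\searrow u$ with $u_j\in\PSHo{X}$, as required.
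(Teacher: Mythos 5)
Your ``$\Leftarrow$'' direction is correct, and the skeleton of ``$\Rightarrow$'' (continuous majorants $g_j\searrow u$ with $g_j>u$, Edwards' theorem, the squeeze $u\le E_j\le g_j$, and the directedness-plus-Dini extraction at the end) is the right one. The problem is the step you yourself identify as the main obstacle: the continuity of the envelope $E_j$. You derive it from the assertion that $\PSH{X}$ is stable under upper semicontinuous regularization of suprema of families bounded above, so that $E_j^{*}\in\PSH{X}$. That assertion is nowhere established and is not a formal consequence of the definitions. To check $E_j^{*}(z)\le\int E_j^{*}\,d\mu$ for $\mu\in\J{z}{X}$ one has to pass from the inequalities $E_j(w)\le\int E_j\,d\nu$, $\nu\in\J{w}{X}$, along a sequence $w_k\to z$ realizing $E_j^{*}(z)$, and this needs some semicontinuity of the correspondence $w\mapsto\J{w}{X}$, which is exactly what fails for general compacta; the classical proof for open sets uses sub-mean value inequalities over translated circles, i.e.\ the affine structure that is unavailable on an arbitrary compact $X$. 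Continuity of plurisubharmonic envelopes of continuous data is essentially Walsh's theorem \cite{Wa} and already for $X=\bar\Omega$ requires boundary regularity (the paper explicitly avoids Walsh's theorem for this very reason), so a one-line regularization argument that would yield $E_j=E_j^{*}$ for every compact $X$ cannot be right. As written, the hard direction is therefore incomplete.

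The paper does not reprove this theorem (it quotes \cite{CHP}), but the technique of \cite{CHP} is reproduced verbatim in the proof of Theorem \ref{thm:bvalues}, and it sidesteps continuity of the envelope entirely. After applying Edwards' theorem (Theorem \ref{thm:Edwards}) one uses that $\J{z}{X}$ is weak-$*$ compact, so the infimum defining $E_j(z)$ is attained at some $\mu_z$; since $g_j>u$ and $u\in\PSH{X}$ this gives the \emph{strict} pointwise inequality $E_j(z)=\int g_j\,d\mu_z>\int u\,d\mu_z\ge u(z)$. For each $z$ one then picks a single competitor $\psi_z\in\PSHo{X}$ with $\psi_z\le g_j$ and $\psi_z(z)>u(z)$, observes that $\{w\in X: u(w)-\psi_z(w)<0\}$ is open because $u-\psi_z$ is upper semicontinuous, extracts a finite subcover of the compact set $X$, and takes the finite maximum $v_j$ of the corresponding competitors, which belongs to $\PSHo{X}$ and satisfies $u<v_j\le g_j$ on all of $X$; replacing $g_{j+1}$ by $\min\{g_{j+1},v_j\}$ before the next step makes the sequence decreasing. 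If you substitute this compactness-and-covering argument for your regularization argument, the rest of your bookkeeping goes through unchanged.
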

\begin{rem}
	Since every function $v \in \PSHo{X}$ can be approximated by a decreasing sequence of \emph{smooth} plurisubharmonic functions defined in neighborhoods of $X$, it follows by a diagonalization argument that the functions $u_j$ in the theorem in fact can assumed to be smooth.
\end{rem}
The functions in $\PSH{X}$ share a lot of nice properties with ordinary plurisubharmonic functions. Here follows some examples of that.

\begin{exa}
If $u,v \in \PSH{X}$ and $s,t \geq 0$, it follows from the properties of the integral that
\begin{enumerate}
\item
	$su+tv \in \PSH{X}$,
\item
	$\max\{u,v\} \in \PSH{X}$.
\end{enumerate}
\end{exa}
\begin{exa}\label{ex:monotone_seq}
	Suppose that  $u_j \in \PSH{X}$ and $u_j \geq u_{j+1}$. Then it follows from the monotone convergence theorem that $\lim u_j \in \PSH X$.
\end{exa}
\begin{exa}
	The function $u \in C(X)$ is plurisubharmonic on $X$ if and only if there are functions $u_j \in \PSHo X$ such that $u_j \to u$ uniformly on $X$. This follows immediately from Theorem \ref{thm:approx} and Dini's theorem.
\end{exa}
\begin{exa}
	If $u$ is plurisubharmonic in some neighborhood of $X$, then $u|_X \in \PSH{X}$. This follows from the basic fact that $u|_X$ may be approximated by a decreasing sequence $u_j \in \PSHo X$.
\end{exa}
This last example is of principal interest, since it guarantees that for any $X$, there exist plenty of plurisubharmonic functions on $X$. On the other hand, it is interesting to know if there are other plurisubharmonic functions of $X$ besides restrictions of ordinary plurisubharmonic functions. The following example gives an answer to this question..
\begin{exa}\label{ex:psh_ext}
	Let $\D$ denote the unit disk in $\C$ and consider the plurisubharmonic function 
	\[
	u(z)=-\sqrt{1-\abs{z}^2}
	\]
	defined on $X:=\bar\D$. Since it can be approximated by the bounded sequence $u_j(z)=u\bigl((1-1/j)z\bigr)$, it belongs to $\PSH{\bar \D}$. Since $\Delta u$ tends to infinity as $z$ tends to $\partial X$, it cannot be extended to a subharmonic function in any neighborhood of $X$. Using the theory of the complex Monge--Amp\`{e}re operator, this example can also be extended to higher dimensions. See \cite{C} for the details.
\end{exa}

It is not at all clear from the definition of $\PSH X$, whether it is a local property to be plurisubharmonic on a compact set. However, Gauthier has shown that it is a local property to be uniformly approximable by functions in $\PSHo X$. Using Theorem \ref{thm:approx}, we can therefore deduce the following localization theorem. 

\begin{thm}[\cite{Gau}] \label{thm:Loc}
A function $u \in \PSH{X}\cap C(X)$ if, and only if, for each $z \in X$, there is a neighborhood $B=B_z$ such that $u|_{X \cap \bar B} \in \PSH{X \cap \bar B}\cap C(X\cap \bar B)$.
\end{thm}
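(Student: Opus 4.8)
The plan is to reduce the statement to the fact---recorded in the example following Theorem \ref{thm:approx}, via Dini's theorem---that for a \emph{continuous} function, membership in $\PSH{X}$ is equivalent to being a uniform limit on $X$ of functions in $\PSHo{X}$. Once this reformulation is in place, the theorem becomes precisely the assertion that uniform approximability by functions in $\PSHo{X}$ is a local property, which is Gauthier's theorem \cite{Gau}. Thus the genuine content is imported from \cite{Gau}, and what remains is to check that the two sides of the stated equivalence translate correctly into Gauthier's local/global approximation condition.

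For the forward implication, suppose $u \in \PSH{X} \cap C(X)$. Continuity clearly passes to the restriction $u|_{X \cap \bar B}$ for any neighborhood $B$. For the plurisubharmonicity of the restriction, I would choose a sequence $u_j \in \PSHo{X}$ with $u_j \to u$ uniformly on $X$. Each $u_j$ extends to some $\tilde u_j \in \PSH{U_j} \cap C(U_j)$ on an open set $U_j \supset X$; since $X \cap \bar B \subset X \subset U_j$, the set $U_j$ is also an open neighborhood of the compact set $X \cap \bar B$, and $\tilde u_j$ witnesses that $u_j|_{X \cap \bar B} \in \PSHo{X \cap \bar B}$. As $u_j|_{X \cap \bar B} \to u|_{X \cap \bar B}$ uniformly, the uniform-approximation characterization gives $u|_{X \cap \bar B} \in \PSH{X \cap \bar B} \cap C(X \cap \bar B)$; this holds for every neighborhood $B$, in particular for the one required.

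For the reverse implication, assume that for each $z \in X$ there is a neighborhood $B_z$ with $u|_{X \cap \bar B_z} \in \PSH{X \cap \bar B_z} \cap C(X \cap \bar B_z)$. Continuity is a local property, so $u \in C(X)$. Applying the uniform-approximation characterization on each compact piece $X \cap \bar B_z$, the restriction $u|_{X \cap \bar B_z}$ is a uniform limit of functions in $\PSHo{X \cap \bar B_z}$; this is exactly the local approximability hypothesis in Gauthier's theorem. Invoking the local-implies-global direction of that theorem, $u$ is a uniform limit on $X$ of functions in $\PSHo{X}$, whence $u \in \PSH{X} \cap C(X)$.

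The only genuine difficulty is Gauthier's localization result itself, which is assumed as \cite{Gau}; once it is granted, the argument is bookkeeping. The two points requiring a moment's care are that restriction sends $\PSHo{X}$ into $\PSHo{X \cap \bar B}$, so that the forward direction's approximants survive restriction, and that the continuous-function characterization is available on \emph{each} compact set $X \cap \bar B_z$ as well as on $X$, so that both the hypotheses and the conclusion of Gauthier's theorem match the $\PSH{\cdot} \cap C(\cdot)$ formulation in the statement.
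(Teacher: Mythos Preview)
Your proposal is correct and follows exactly the route the paper indicates: the paper does not give a detailed proof but simply notes that Gauthier's result says uniform approximability by $\PSHo{X}$ is local, and that combining this with Theorem~\ref{thm:approx} (via Dini's theorem, as in the example following it) yields the stated localization for $\PSH{X}\cap C(X)$. You have filled in precisely the bookkeeping the paper leaves implicit.
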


We now turn to the question of when a function $u \in \PSH{\Omega}\cap\USC{\bar\Omega}$ belongs to $\PSH{\bar\Omega}$. By definition, one determines this by integrating against all Jensen measures for every $z \in \bar \Omega$. The next theorem shows that it in fact suffices to do this for $z \in \partial \Omega$.

\begin{thm} \label{thm:bvalues}
Let $\Omega \Subset \C^n$ be a domain. If $\varphi \in\PSH{\Omega}\cap\USC{\bar\Omega}$ and for every $z \in \partial \Omega$ we have that
\begin{equation}\label{eq:ineq}
\varphi(z) \leq \int \varphi \, d\mu \quad \forall \ \mu \in \J{z}{\bar \Omega},
\end{equation}
then $\varphi \in \PSH{\bar\Omega}$.
\end{thm}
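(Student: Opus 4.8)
The plan is to show that $\varphi$ satisfies the Jensen inequality \eqref{eq:pshineq} at every point $z\in\bar\Omega$, knowing it already at every $z\in\partial\Omega$. For interior points $z\in\Omega$ this is almost automatic: since $\varphi$ is plurisubharmonic on the open set $\Omega$, it is plurisubharmonic on every compact ball $\bar B\Subset\Omega$, so in particular, using that the arc-length push-forward measures (as in Example \ref{ex:pshinterior}) and more generally all of $\J{z}{\bar B}$ work, one reduces to showing $\J{z}{\bar\Omega}$ behaves well near interior points. The cleanest route, though, is to use Theorem \ref{thm:approx}: it suffices to produce functions $u_j\in\PSHo{\bar\Omega}$ with $u_j\searrow\varphi$ on $\bar\Omega$. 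By Edwards' theorem (Theorem \ref{thm:Edwards}), for a lower semicontinuous $\phi$ the upper envelope of $\PSHo{\bar\Omega}$-minorants of $\phi$ equals $\inf\{\int\phi\,d\mu:\mu\in\J{z}{\bar\Omega}\}$, so the obstruction to approximating $\varphi$ from above is precisely the gap between $\varphi(z)$ and this infimum. Hence it is enough to prove $\varphi(z)\le\int\varphi\,d\mu$ for \emph{every} $z\in\bar\Omega$ and every $\mu\in\J{z}{\bar\Omega}$, i.e. to upgrade the hypothesis from $\partial\Omega$ to all of $\bar\Omega$.

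So fix $z\in\Omega$ and $\mu\in\J{z}{\bar\Omega}$; we must show $\varphi(z)\le\int\varphi\,d\mu$. The idea is to decompose or perturb $\mu$ into a measure supported closer to $\partial\Omega$. First I would regularize: since $\varphi\in\USC{\bar\Omega}$, write $\varphi=\inf_k\psi_k$ with $\psi_k\in C(\bar\Omega)$ decreasing, so by monotone convergence it suffices to control $\int\psi_k\,d\mu$; but $\psi_k$ need not be plurisubharmonic, so instead I would keep $\varphi$ and exploit its plurisubharmonicity on $\Omega$ directly. The key step is a \emph{balayage} (sweeping-out) argument: given $\mu\in\J{z}{\bar\Omega}$ with $z\in\Omega$, one sweeps the part of $\mu$ lying in $\Omega$ out toward $\partial\Omega$. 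Concretely, pick a small ball $B=B(z_0,r)$ with $\bar B\subset\Omega$; replace $\mu|_{\bar B}$ by its balayage onto $\partial B$ along harmonic measure (equivalently, the fiberwise arc-length measures), obtaining a new probability measure $\mu'$. Since $\varphi$ is plurisubharmonic on $\Omega$, it is subharmonic on each complex line through $\bar B$, hence $\int\varphi\,d\mu\le\int\varphi\,d\mu'$, while testing against $u\in\PSHo{\bar\Omega}$ (which are likewise subharmonic on $B$) shows $\mu'$ is again a Jensen measure for $z$. Iterating this over a countable exhaustion of $\Omega$ by balls, or taking a limit of such sweep-outs, one produces a Jensen measure $\tilde\mu\in\J{z}{\bar\Omega}$ with $\supp\tilde\mu\subset\partial\Omega$ and $\int\varphi\,d\mu\ge\int\varphi\,d\tilde\mu$ — here upper semicontinuity of $\varphi$ on $\bar\Omega$ is used to pass to the limit (the limsup passes through).

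But $\tilde\mu$ supported on $\partial\Omega$ need not be a Jensen measure \emph{for a boundary point}, which is what the hypothesis \eqref{eq:ineq} directly gives. To close the loop I would disintegrate: a Jensen measure $\tilde\mu\in\J{z}{\bar\Omega}$ supported on $\partial\Omega$ satisfies, for each boundary point $w$, nothing a priori; instead, use that since $\tilde\mu$ is a Jensen measure for $z$ against $\PSHo{\bar\Omega}$ and is carried by $\partial\Omega$, one may regard it as a Jensen measure with respect to the restricted cone $\PSHo{\partial\Omega}$ and then invoke \eqref{eq:ineq} at a suitably chosen point — or, more simply, bypass this by choosing the sweep-out to terminate at $z$ itself when $z\in\partial\Omega$ and otherwise noting that the inequality $\varphi(z)\le\int\varphi\,d\tilde\mu$ with $\tilde\mu$ on $\partial\Omega$ follows from applying the open-set mean value property one last time is \emph{not} valid, so the honest argument must show $\tilde\mu\in\bigcup$ of boundary Jensen measures via a further decomposition $\tilde\mu=\int\mu_w\,d\sigma(w)$ with $\mu_w\in\J{w}{\bar\Omega}$. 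I expect \textbf{this disintegration step — showing that a Jensen measure for an interior point, once swept onto the boundary, splits as an average of boundary Jensen measures — to be the main obstacle}, and it is presumably where one needs either a measurable-selection argument or a direct Hahn–Banach/Edwards duality on $\partial\Omega$ combined with the already-established fact that functions in $\PSHo{\bar\Omega}$ restrict to a rich enough cone on $\partial\Omega$. Once that is in hand, $\varphi(z)\le\int\varphi\,d\tilde\mu=\int\!\!\int\varphi\,d\mu_w\,d\sigma(w)$, and applying \eqref{eq:ineq} under the inner integral — wait, that goes the wrong way; rather one uses $\varphi(z)\le\int\varphi\,d\mu$ is what we want, and $\int\varphi\,d\mu\ge\int\varphi\,d\tilde\mu\ge\int\varphi(w)\,d\sigma(w)\ge\varphi(z)$ where the last inequality holds because $\sigma=(\pi)_*\tilde\mu$ is itself a Jensen measure for $z$ (barycenter argument), completing the proof.
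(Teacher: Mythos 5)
Your reduction to proving the Jensen inequality $\varphi(z)\le\int\varphi\,d\mu$ for all $z\in\bar\Omega$ and all $\mu\in\J{z}{\bar\Omega}$ is just the definition of $\PSH{\bar\Omega}$, so the entire content lies in passing from boundary points to interior points, and that is where the proposal breaks down. Your balayage step gives $\int\varphi\,d\mu\le\int\varphi\,d\mu'$ (sweeping a plurisubharmonic function outward increases its integral), so after sweeping all the interior mass onto $\partial\Omega$ you obtain $\int\varphi\,d\mu\le\int\varphi\,d\tilde\mu$: an upper bound on the quantity you need to bound from below. The inequality $\int\varphi\,d\mu\ge\int\varphi\,d\tilde\mu$ invoked in your final chain contradicts your own sweeping step, so the reduction to boundary-supported measures buys nothing. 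Moreover, the two remaining ingredients are not established: the disintegration of $\tilde\mu$ into boundary Jensen measures $\mu_w\in\J{w}{\bar\Omega}$ is exactly the step you flag as the main obstacle and never prove, and the closing inequality $\int\varphi\,d\sigma\ge\varphi(z)$ for a boundary-supported $\sigma\in\J{z}{\bar\Omega}$ with $z\in\Omega$ is an instance of the very statement being proved, so the argument is circular even granting the disintegration.

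The paper avoids interior Jensen measures altogether. Fix continuous $\phi_j\searrow\varphi$ on $\bar\Omega$ and set $F_j(z)=\sup\{v(z):v\in\PSHo{\bar\Omega},\ v\le\phi_j\}$. Edwards' theorem identifies $F_j(z)$ with $\inf\{\int\phi_j\,d\mu:\mu\in\J{z}{\bar\Omega}\}$; weak-* compactness of $\J{z}{\bar\Omega}$ makes the infimum attained, and hypothesis \eqref{eq:ineq} then gives $F_j(z)>\varphi(z)$ for every $z\in\partial\Omega$. A finite-cover argument on the compact set $\partial\Omega$ upgrades this to a single $v_j\in\PSHo{\bar\Omega}$ with $v_j\le\phi_j$ and $\varphi<v_j$ on all of $\partial\Omega$. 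Since $\{\varphi<v_j\}$ is open and contains $\partial\Omega$, the function equal to $\max\{\varphi,v_j\}$ on $\Omega$ and to $v_j$ near $\partial\Omega$ is plurisubharmonic on a neighborhood of $\bar\Omega$; these functions decrease to $\varphi$, and passing to the limit in their Jensen inequalities yields $\varphi\in\PSH{\bar\Omega}$. If you want to salvage your approach, this gluing construction is the missing idea: the theorem is proved by producing approximants from above in $\PSHo{\bar\Omega}$, not by modifying the measure.
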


\begin{proof}
Since $\phi$ is upper semicontinuous, it can be approximated by a strictly descending sequence $\{\phi_j\} \in \PSHo{\bar\Omega}$. We claim that we can find functions $\{v_j\} \in \PSHo{\bar\Omega}$ such that $v_j \leq \phi_j$ and $v_j(z) \searrow \phi(z)$ for every $z \in \partial \Omega$. From this it follows that the functions
\[
	u_j(z)=\begin{cases} \max\bigl\{\phi(z),v_j(z)\bigr\}, & \mbox{if } z \in \bar\Omega;\\
					v_j(z),& \mbox{else.}
		\end{cases}
\]
will all belong to $\PSHo{\bar\Omega}$, and $u_j\to \phi$. It now follows from the dominated convergence theorem that $\phi \in \PSH{\bar\Omega}$.

It remains to prove the claim. This will be done using the same idea as in \cite{CHP}. Let
\[
F_j(z):=\sup\{\varphi(z): \varphi \in \PSHo{\bar\Omega}, \varphi \leq \phi_j \}.
\]
It then follows from Edwards' theorem (Theorem \ref{thm:Edwards}) that 
\[
F_j(z)=\inf\{\int \phi_j \, d\mu: \mu \in \J{z}{\bar\Omega}\}.
\]
Since $\J{z}{\bar\Omega}$ is compact in the weak-* topology, we can for each $z \in \bar\Omega$ find $\mu_z \in \J{z}{\bar\Omega}$ such that $F_j(z)=\int \phi_j \, d\mu_z.$ It then holds that $\phi_j \geq F_j$, and 
\[
F_j(z)=\int \phi_j \, d\mu_z > \int \phi \, d\mu_z \geq \phi(z),\quad \forall z \in \partial \Omega.
\]
By the construction of $F_j$ we know that for every given $z \in \partial\Omega$, there exists a function $v_z \in \PSHo{\bar\Omega}$ such that $v_z \leq F_j$ and $\phi(z)<v_z(z) \leq F_j(z)$. The function $\phi-v_z$ is upper semi-continuous and therefore the set $U_z:=\{w \in \partial\Omega: \phi(w)-v_z(w)<0\}$ is open in $\partial\Omega$. It now follows from the compactness of $\partial\Omega$ that there are finitely many points $z_1,...,z_k$ with corresponding functions $v_{z_1},...,v_{z_k}$ and open sets $U_{z_1},...,U_{z_k}$ such that $u < v_{z_j}$ in $U_{z_j}$ and $\partial\Omega=\bigcup_{j=1}^k U_{z_j}$. The function $v_j=\max\{v_{z_1},...,v_{z_k}\}$ belongs to $\PSHo{\bar\Omega}$ and $\phi(z)< v_j(z) \leq \phi_j(z)$ for $z\in \partial \Omega$. This completes the proof.
\end{proof}
In some sense, this theorem  seems to suggest that the question of plurisubharmonicity on $\bar\Omega$ can be localized to the boundary. This is however not the case, since for a point $z \in \partial\Omega$ a measure $\mu \in\J{z}{\bar\Omega}$, could have support off $\partial \Omega$, thus collecting information from the interior of $\Omega$. This leads us into the next section and the notion of P-hyperconvexity.

\section{P-hyperconvexity} \label{sec:phx}
We will now introduce a new notion of regularity of a domain called P-hyperconvexity. This notion is quite natural when working with approximation of plurisubharmonic functions, and positions itself between other commonly used notions of regularity. We remind the reader of two of those.

\begin{defin}
	Suppose that $\Omega$ is a domain in $\Cn$. If there exists a non-constant  function $\phi \in \PSH \Omega \cap C(\bar \Omega)$ such that $\phi(z) = 0$ for all $z\in \partial \Omega$, we say that $\Omega$ is \emph{hyperconvex} and that $\phi$ is a \emph{negative plurisubharmonic exhaustion function}.
\end{defin}
If $\Omega$ is a hyperconvex domain with negative plurisubharmonic exhaustion function $\psi$, the function $-1/\psi$ is a plurisubharmonic exhaustion function for $\Omega$. Hence pseudoconvexity is a necessary condition for hyperconvexity. The punctured disk in $\C$ shows that this condition is not sufficient, but Demailly \cite{D} has shown that every pseudoconvex domain whose domain locally can be written as a the graph of a Lipschitz function, is hyperconvex.

For future reference, we collect some well-known properties of hyperconvex domains.
\begin{thm}[\cite{KR}]\label{thm:hx_lokal}
	The domain $\Omega$ is hyperconvex if and only if for all $z \in \partial \Omega$, there exists a neighborhood $U$ of $z$ such that $\Omega\cap U$ is hyperconvex.
\end{thm}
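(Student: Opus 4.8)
The plan is to establish the two implications separately; the forward implication is elementary, while the reverse is the substance of the theorem (it is essentially the Kerzman--Rosay localization theorem for hyperconvexity). Throughout I would use the following reformulation: if $\psi\in\PSH{\Omega}\cap C(\bar\Omega)$ is non-constant with $\psi|_{\partial\Omega}=0$, then the maximum principle forces $\psi<0$ on $\Omega$, and continuity of $\psi$ on the compact set $\bar\Omega$ together with $\psi|_{\partial\Omega}=0$ makes every sublevel set $\{\psi\le-\epsilon\}$ a compact subset of $\Omega$; thus ``$\Omega$ is hyperconvex'' is equivalent to the existence of a negative continuous plurisubharmonic exhaustion of $\Omega$, and the same reformulation applies to $\Omega\cap U$.

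For the direction ``hyperconvex $\Rightarrow$ locally hyperconvex'' I would fix $z\in\partial\Omega$ and a ball $U=B(z,r)$ and exhibit the exhaustion
\[
	v(w):=\max\bigl\{\psi(w),\;|w-z|^2-r^2\bigr\},\qquad w\in\Omega\cap U,
\]
where $\psi$ is a negative exhaustion of $\Omega$: it is plurisubharmonic on $\Omega\cap U$ as a maximum of two plurisubharmonic functions, continuous and non-positive up to $\overline{\Omega\cap U}$, vanishes on $\partial(\Omega\cap U)$ (the first term vanishes on the part $\partial\Omega\cap\bar U$ of the boundary, the second on the part $\partial U\cap\bar\Omega$), and is strictly negative at interior points, hence non-constant. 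If $\Omega\cap U$ happens to be disconnected one argues componentwise.

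For the reverse direction I would first note that a negative exhaustion $\psi$ of a domain yields the plurisubharmonic exhaustion $-1/\psi$, so local hyperconvexity forces $\Omega$ to be pseudoconvex at each boundary point, hence pseudoconvex; fix a smooth strictly plurisubharmonic exhaustion $\rho\colon\Omega\to[0,\infty)$ with $\{\rho<c\}\Subset\Omega$ for every $c$. By compactness of $\partial\Omega$, choose finitely many open sets $U_1,\dots,U_N$ covering $\partial\Omega$ with negative continuous plurisubharmonic exhaustions $\sigma_j$ of $\Omega\cap U_j$, normalized so that $-1\le\sigma_j<0$. Since $\bar\Omega\setminus\bigcup_j U_j$ is a compact subset of $\Omega$, it lies in some $\{\rho<c\}$, and then the collar $\{\rho\ge c\}$, together with $\partial\Omega$, is contained in $\bigcup_j U_j$. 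The goal is to glue the functions $\sigma_j$ across this collar, and to glue the result to a cutoff of $\rho$ over the interior, into a single negative continuous plurisubharmonic exhaustion of $\Omega$; non-constancy and the exhaustion property are then routine, and one concludes via the reformulation of the first paragraph.

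The hard part is precisely this gluing, and I expect it to be the only genuinely delicate point. The obstruction is that $\sigma_j$, being an exhaustion of $\Omega\cap U_j$, tends to $0$ along the \emph{entire} relative boundary of $\Omega\cap U_j$, in particular along the portion $\partial U_j\cap\Omega$ lying in the interior of $\Omega$; thus at such an interface the limiting value $0$ of $\sigma_j$ exceeds the strictly negative values of the neighbouring patches (other $\sigma_i$'s, or the $\rho$-cutoff), so the standard maximum-gluing lemma for plurisubharmonic functions does not apply directly. The way around this is the Kerzman--Rosay device of separating the patches by intermediate level sets of $\rho$ and rescaling them so that across each interface exactly one function is active and dominates the others, which restores the hypotheses of the gluing lemma; keeping the finitely many patches, levels of $\rho$, and constants mutually consistent is the part of the argument I would have to be most careful about.
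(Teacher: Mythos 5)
The paper offers no proof of this statement --- it is quoted from Kerzman--Rosay \cite{KR} --- so your attempt can only be judged on its own merits. Your forward direction is complete and correct: $\max\bigl\{\psi(w),\,|w-z|^2-r^2\bigr\}$ is exactly the right barrier, each of the two terms vanishing on the appropriate piece of $\partial(\Omega\cap U)$, and the preliminary reformulation and the componentwise remark are fine.

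The reverse direction, however, contains a genuine gap at precisely the point you flag. Your proposed mechanism --- separating patches by level sets of $\rho$ and rescaling the $\sigma_j$ --- handles only the interface between the boundary collar $\{\rho\ge c\}$ and the interior, because the level sets of an exhaustion are compactly contained in $\Omega$, where every $\sigma_j$ is bounded away from $0$ and rescaling buys you domination. It does nothing for the patch-to-patch interfaces \emph{inside} the collar: since no single $U_j$ covers $\partial\Omega$, the set $\partial U_j\cap\Omega$ necessarily clusters on $\partial U_j\cap\partial\Omega$, and there $\sigma_j\to 0$ while every competing candidate (being negative and continuous up to $\bar\Omega$ with value $0$ on $\partial\Omega$) also tends to $0$. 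No positive constant $A_j$ changes the fact that $A_j\sigma_j\to 0$, so the inequality $A_j\sigma_j\le(\text{neighbouring function})$ required by the maximum-gluing lemma would demand a quantitative comparison of the \emph{rates} at which two unrelated local exhaustions approach $0$ --- and no such comparison is available from the hypotheses. This is why the localization is a real theorem rather than a routine patching exercise. The known arguments circumvent the difficulty by a different device: one first converts each local exhaustion into a \emph{globally defined} plurisubharmonic function on $\Omega$ (for instance by taking the maximum with a large multiple of $|z-p|^2-r'^2$, which strictly dominates $\sigma_j$ near $\partial U_j\cap\Omega$ because it is positive there, at the price of positivity away from $p$), and only then combines the resulting global functions, e.g.\ through an upper envelope such as the relative extremal function $\sup\{w\in\PSH{\Omega}: w\le 0,\ w\le -1 \text{ on } K\}$, whose boundary behaviour can be tested against each local barrier separately. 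As written, your outline asserts that the gluing lemma's hypotheses can be "restored" by rescaling, which is the one claim that cannot be made good; you would need to replace that step by an argument of the above type (or simply invoke \cite{KR}, as the paper does).
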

\begin{thm}[\cite{CCW}]\label{thm:jensen_supp}
	The domain $\Omega$ is hyperconvex if and only if the following holds: If $z \in \partial \Omega$ and $\mu$ is a probability measure on $\bar\Omega$ such that 
	\[
		u(z) \leq \int u \,d\mu, \quad \forall u \in \PSH\Omega,
	\]
	then it follows that $\supp(\mu) \subset \partial \Omega$.
\end{thm}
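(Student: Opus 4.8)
The plan is to prove the two implications separately, the forward one by a one‑function test and the reverse by contraposition through Edwards' theorem.

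\emph{Forward implication.} Suppose $\Omega$ is hyperconvex and let $\psi\in\PSH\Omega\cap C(\bar\Omega)$ be a negative plurisubharmonic exhaustion function; thus $\psi=0$ on $\partial\Omega$, and since $\psi$ is non-constant and plurisubharmonic on the connected set $\Omega$, the maximum principle gives $\psi<0$ throughout $\Omega$. Let $z\in\partial\Omega$ and let $\mu$ be a probability measure on $\bar\Omega$ with $u(z)\le\int u\,d\mu$ for every $u\in\PSH\Omega$, where $u(z)$ is understood as $\limsup_{\Omega\ni w\to z}u(w)$; for the continuous function $\psi$ this value is just $\psi(z)=0$. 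Testing the inequality against $\psi$ yields $0=\psi(z)\le\int_{\bar\Omega}\psi\,d\mu=\int_{\Omega}\psi\,d\mu$, the last equality because $\psi\equiv0$ on $\partial\Omega$. As $\psi<0$ on $\Omega$, this forces $\mu(\Omega)=0$, i.e.\ $\supp\mu\subset\partial\Omega$.

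\emph{Reverse implication.} I argue contrapositively: assuming $\Omega$ is not hyperconvex, I produce $\zeta_0\in\partial\Omega$ and a probability measure $\mu$ on $\bar\Omega$ with $u(\zeta_0)\le\int u\,d\mu$ for all $u\in\PSH\Omega$ and with $\mu(\Omega)>0$. Fix a closed ball $\bar B=\bar B(a,r)\Subset\Omega$ and consider the regularized relative extremal function $\omega:=\bigl(\sup\{v\in\PSH\Omega:\ v\le0,\ v\le-1\ \text{on}\ \bar B\}\bigr)^{*}$, which is plurisubharmonic on $\Omega$, satisfies $-1\le\omega\le0$, and is non-constant. If, for every choice of $\bar B$, the function $\omega$ tended to $0$ at each boundary point, then $\Omega$ would carry a bounded negative plurisubharmonic exhaustion function and hence be hyperconvex, after the standard upgrade to a continuous (in fact smooth) exhaustion function; so there is a ball $\bar B$ and a point $\zeta_0\in\partial\Omega$ with $\limsup_{z\to\zeta_0}\omega(z)<0$. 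Now apply Edwards' theorem at $\zeta_0$ to the lower semicontinuous function $\phi=-\mathbf 1_{\bar B}$: it equates $\sup\{v(\zeta_0):\ v\le\phi\}$, the supremum taken over the cone generated by $\{u^{*}:u\in\PSH\Omega\}$ together with the constants, with $\inf\{\int\phi\,d\mu:\ \mu\ \text{a Jensen measure for}\ \zeta_0\}=-\sup\{\mu(\bar B):\ \mu\ \text{a Jensen measure for}\ \zeta_0\}$. Identifying the left-hand side with the boundary value $\limsup_{z\to\zeta_0}\omega(z)$ — this is where the failure of hyperconvexity enters — shows that the infimum is strictly negative, so some Jensen measure $\mu$ for $\zeta_0$ has $\mu(\bar B)>0$; since $\bar B\Subset\Omega$, this $\mu$ meets $\Omega$, contradicting the assumed support property. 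The case where $\Omega$ is not pseudoconvex is subsumed, because the exhaustion constructed above would in particular force pseudoconvexity.

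\emph{Main obstacle.} The delicate point is the treatment of Jensen measures over boundary points in the reverse implication. Although $\PSH\Omega$ is a cone, evaluating its members at $\zeta_0\in\partial\Omega$ through upper semicontinuous regularization produces the family $\{u^{*}:u\in\PSH\Omega\}$, which need not be closed under addition; one therefore has to pin down exactly which cone Edwards' theorem is applied to and to verify that its barycentric measures over $\zeta_0$ coincide with the measures in the statement. Intertwined with this is the semicontinuity argument needed to identify $\sup\{v(\zeta_0):v\le\phi\}$ with the genuine boundary limit of the relative extremal function $\omega$, rather than with only a one-sided estimate, which is precisely what makes the hypothesis bite. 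Put differently, the real work is to construct an honest Jensen measure carrying mass inside $\Omega$ and sitting over a boundary point of an arbitrary non-hyperconvex domain, and this is where Edwards' duality together with the relative extremal function has to be exploited most carefully.
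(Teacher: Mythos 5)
The paper does not actually prove this statement; it is quoted from \cite{CCW}, so there is no internal proof to compare against and I am judging your argument on its own merits and against the approach of that reference. Your forward implication is correct and standard: testing the measure against a continuous negative exhaustion function, which is strictly negative throughout $\Omega$ by the maximum principle, forces $\mu(\Omega)=0$. The reverse implication, however, has a genuine gap --- one that you flag yourself in your ``main obstacle'' paragraph but do not close. Edwards' theorem must be applied to a cone of upper semicontinuous functions on $\bar\Omega$, and the only choice that keeps the rest of your argument running (lower semicontinuity of the envelope, well-defined boundary values of the candidates) is $\PSH{\Omega}\cap C(\bar\Omega)$. But the barycentric measures of that cone form a class that is a priori strictly \emph{larger} than the class in the statement, which is cut out by testing against \emph{all} of $\PSH{\Omega}$. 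Exhibiting a measure in the larger class with $\mu(\bar B)>0$ does not negate the hypothesis ``every measure in the smaller class is carried by $\partial\Omega$'', so the contrapositive is not established. Closing this requires proving that at boundary points the two classes of Jensen measures coincide --- a nontrivial approximation statement which is precisely one of the substantive ingredients in \cite{CCW} --- or reworking the duality with a cone generated by the upper semicontinuous extensions $u^{*}$, at which point one loses the lower semicontinuity that your envelope argument relies on. This is the missing idea, not a detail.

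Two secondary remarks. The negation of ``the relative extremal function tends to $0$ at every boundary point'' only yields $\liminf_{z\to\zeta_0}\omega(z)<0$, not $\limsup_{z\to\zeta_0}\omega(z)<0$ as you assert; this is harmless, because each Edwards candidate $v$ is continuous on $\bar\Omega$ and dominated by $\omega$ on $\Omega$, so $v(\zeta_0)\leq\liminf_{z\to\zeta_0}\omega(z)$ is all that is needed. For the same reason, the exact ``identification'' of the Edwards supremum with the boundary limit of $\omega$ that you list as a difficulty is a non-issue --- the one-sided inequality suffices. The real obstruction is the mismatch between the cone fed to Edwards' theorem and the class of measures appearing in the statement.
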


Another notion of regularity is that of strict hyperconvexity. For examples of the uses of this notion, see for example the works of Bremermann \cite{Br}, Nivoche \cite{N}, or Poletsky \cite{PsHx}.
\begin{defin}
	Suppose that $\Omega$ is a domain in $\Cn$. If there exists a neighborhood $U$ of $\bar\Omega$ and a function $\phi \in \PSH U \cap C(U)$ such that 
	\[
	\{z\in U: \phi(z) < 0\}=\Omega,
	\]
	we say that $\Omega$ is \emph{strictly hyperconvex}.
\end{defin}
\begin{rem}
	There seems to be no consensus on the definition of strict hyperconvexity, and often even stricter definitions are used, see for example \cite[p. 417]{N}. This rather general definition is however sufficient for our needs.
\end{rem}
A strictly hyperconvex domain is thus a hyperconvex domain with a negative plurisubharmonic exhaustion function that can be plurisubharmonically extended to some neighborhood of the closure of the domain. Classical example of domains of this type are bounded strictly pseudoconvex domains with $C^2$-boundary and analytic polyhedra. The following example shows that strict hyperconvexity puts a rather strong global restriction on the complex structure of the boundary.
\begin{exa}\label{ex:worm}
	The so-called worm domain, of Diederich and Forn\ae{}ss \cite{DF} is a bounded pseudoconvex domain with $C^\infty$-boundary  in $\C^2$ that has several interesting properties. Here we give a brief description of one construction of the worm domain. For a more thorough presentation, see \cite{KP}.
	
	Let $W$ be the domain defined by
	\[
		W:=\{(z,w) \in \C^2: \abs{z-e^{i\log{\abs w^2}}}^2<1-\eta(\log{\abs w^2})\},
	\]
		where $\eta$ is a function satisfying.
		\begin{enumerate}
		\item
			$\eta \geq 0$, $\eta$ is even and convex;
		\item
			$\eta^{-1}(0) = [-2\pi,2\pi]$;
		\item
			$\eta(x) > 1$ if $\abs{x} > a$, for some $a > 0$.
		\item
			$\eta'(x) \neq 0$ if $\eta(x) = 1$.
		\end{enumerate}
		It can be shown that such an $\eta$ exists, and direct calculations show that $W$ is both smooth and pseudoconvex. It can also be shown that for every $z \in \partial W$ there is a neighborhood $N$ of $z$ such that $N\cap W$ is strictly hyperconvex. Nevertheless, it can easily be seen that $W$ is not strictly hyperconvex. Indeed, suppose that this was the case, then there would have existed a neighborhood $U$ of $\bar W$ and a plurisubharmonic function $\psi$ such that $W = \{(z,w) \in U: \psi(z,w)<0\}$. Now since $(0,w)\in \partial W$ for every $w$ such that $\abs{\log{\abs{w}^2}}\leq 2\pi$, all points of the form $(\epsilon, w)$ lie in $U$ if $\epsilon >0$ is small enough, and $\abs{\log{\abs w^2}}\leq 2\pi$. The function $\psi_\epsilon(\zeta):=\psi(\epsilon, \zeta)$ is therefore a subharmonic function on the annulus $e^{-2\pi}<\abs{\zeta}<e^{2\pi}$. Since the point $(\epsilon, w)$ lies in $W$ if $\abs{\log{\abs{w}^2}}=2\pi$, but lies in $U \setminus \bar W$ if $\abs{\log{\abs{w}^2}}=\pi$, $\psi_\epsilon$ will be negative on its boundary, but non-negative (at least) on the interior points where $\abs{\zeta}=e^{\pi/2}$. This contradicts the maximum principle.
\end{exa}
Here we propose a new, intermediate notion of hyperconvexity.

\begin{defin}
A domain $\Omega \Subset \C^n$ is called \emph{P-hyperconvex} if there is a non-constant function $\phi \in \PSH{\bar \Omega}\cap C(\bar \Omega)$ such that $\phi(z)=0$ for all $z \in \partial \Omega$.
\end{defin}
It follows from Example \ref{ex:pshinterior} that every P-hyperconvex domain is hyperconvex, and we will soon see that this inclusion is strict. For this we need the following easy proposition. 

\begin{prop}\label{prop:fat}
If $\Omega$ is a P-hyperconvex domain, then $\Omega$ is fat, i.e. $\Omega=(\bar \Omega)^{\circ}$.
\end{prop}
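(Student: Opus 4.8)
The plan is to prove the contrapositive: if $\Omega$ is not fat, i.e.\ $\bar\Omega \setminus (\bar\Omega)^\circ \neq \emptyset$, then $\Omega$ cannot be P-hyperconvex. So pick a point $z_0 \in \partial\Omega$ lying in the closure of the set $(\bar\Omega)^c$ but which is \emph{not} on the topological boundary of the open set $(\bar\Omega)^\circ$ — more precisely, one can find $z_0 \in \bar\Omega$ and a small ball $B = B(z_0,r)$ with $B \cap \Omega = \emptyset$ yet $z_0 \in \bar\Omega$. Actually the cleanest route: since $\Omega$ is not fat, the open set $\Omega$ is not dense in $(\bar\Omega)^\circ$, so $(\bar\Omega)^\circ \setminus \bar\Omega$... let me instead say: there is a point $z_0 \in \bar\Omega$ and a ball $B \ni z_0$ such that $B \subset \bar\Omega$ but $B \cap \partial\Omega \neq \emptyset$; indeed if $\Omega$ were not fat there would be a boundary point $w$ of $\Omega$ in the interior of $\bar\Omega$, and then a whole ball around $w$ sits inside $\bar\Omega$ while $w \in \partial\Omega$.

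Now suppose toward a contradiction that $\phi \in \PSH{\bar\Omega} \cap C(\bar\Omega)$ is non-constant with $\phi \equiv 0$ on $\partial\Omega$. By Theorem~\ref{thm:approx} there are $\phi_j \in \PSHo{\bar\Omega}$ with $\phi_j \searrow \phi$. Since $\bar\Omega \supset B$, each $\phi_j$ restricted to a neighborhood of $B$ is an honest plurisubharmonic function there (it extends plurisubharmonically past $\bar\Omega$), hence $\phi = \lim \phi_j$ is plurisubharmonic on the open ball $B$ in the ordinary sense. But $B$ contains a boundary point $w \in \partial\Omega$ where $\phi(w) = 0$, while $\phi \le 0$ on $\bar\Omega$ (this sign normalization: since $\phi$ is non-constant, continuous, and vanishes on $\partial\Omega$, either $\phi \le 0$ or $\phi \ge 0$ everywhere by an easy argument using that for $z\in\Omega$ the normalized arc-length pushforwards give Jensen measures supported near $z$ — or more simply one replaces $\phi$ by $-\phi$ if needed; the point is $\phi$ attains a $\max$ or $\min$ equal to $0$ at an interior point of $B$). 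So $\phi$ attains an interior maximum (value $0$) on the domain $B$, hence by the maximum principle $\phi \equiv 0$ on $B$, and then by the maximum principle / identity-type propagation $\phi \equiv 0$ on all of $\Omega$ (connectedness) and by continuity on $\bar\Omega$ — contradicting that $\phi$ is non-constant.

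I expect the main obstacle to be the bookkeeping around the sign of $\phi$ and identifying the correct interior point: one must be careful that $\phi \equiv 0$ on $\partial\Omega$ together with non-constancy forces $\phi$ to be everywhere $\le 0$ or everywhere $\ge 0$ (otherwise $\phi$ would have both a positive interior max and a negative interior min, and one of these contradicts plurisubharmonicity of $\phi$ on $\bar\Omega$ via a Jensen measure at an interior point). Once the sign is fixed, say $\phi \le 0$, the fact that some $w \in \partial\Omega$ lies in the \emph{interior} of $\bar\Omega$ (which is exactly the failure of fatness) lets the maximum principle bite: $\phi$ has a zero in the open ball $B \subset \bar\Omega$ where it is genuinely subharmonic along complex lines, forcing $\phi \equiv 0$ near $w$, and then $\phi \equiv 0$ throughout $\Omega$ by the maximum principle on the connected open set $\Omega$, contradiction. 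A slight subtlety worth a sentence: to get plurisubharmonicity of $\phi$ on the open ball $B$ we use that $B \subset \bar\Omega$ and Theorem~\ref{thm:approx} gives $\phi_j \searrow \phi$ with $\phi_j$ plurisubharmonic on neighborhoods of $\bar\Omega \supset B$, so the decreasing limit $\phi$ is plurisubharmonic on the open set $B$.
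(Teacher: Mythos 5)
Your argument is correct and is essentially the paper's own proof: both use that a function in $\PSH{\bar\Omega}$ is genuinely plurisubharmonic on the open set $(\bar\Omega)^\circ$ (you get this from the decreasing approximation of Theorem \ref{thm:approx}, the paper from Example \ref{ex:pshinterior}), and then apply the maximum principle at a point of $\partial\Omega$ lying in the interior of $\bar\Omega$. The only wobble is the sign discussion: the dichotomy ``$\phi\le 0$ or $\phi\ge 0$'' and the suggestion to replace $\phi$ by $-\phi$ (which is not available, since $-\phi$ need not be plurisubharmonic) are unnecessary --- the ordinary maximum principle for $\phi|_{\Omega}\in\PSH{\Omega}\cap C(\bar\Omega)$ with zero boundary values already forces $\phi\le 0$, which is exactly the case your argument then handles.
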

\begin{proof}
Assume that $\Omega$ is P-hyperconvex but not fat. Then $\Omega$ has a negative plurisubharmonic exhaustion function $\psi \in \PSH{\bar \Omega}\cap C(\bar \Omega)$, $\psi|_{\partial \Omega}=0$. Assume that $z \in \partial \Omega \cap (\bar \Omega)^\circ$, then $\psi(z)=0$. But this means that $\psi$ attains a maximum in an interior point, since it was seen in Example \ref{ex:pshinterior} that $\psi \in \PSH{\Omega}$, this contradicts the maximum principle.
\end{proof}
\begin{exa}\label{ex:hx_ej_phx}
Let  $U=\mathbb{D}\setminus [-\frac{1}{2},\frac{1}{2}]$, where $\D$ denotes the unit disk in $\C$. Since $U$ is regular for the Laplace equation, it is hyperconvex. However, since $U$ is not fat, it is not P-hyperconvex. By considering $\Omega: = U \times \D \subset \C^2$, we get an example in higher dimension.
\end{exa}
\begin{rem}
 It follows from \cite[Theorem 6.10]{Gar} that all fat hyperconvex domains in $\C$ are P-hyperconvex. It would be very interesting to know whether something like this holds also in higher dimensions. It follows from \cite{AHP} that a counterexample would have to have a very irregular boundary. 
\end{rem}

It follows from the definition that every strictly hyperconvex domain is P-hyperconvex. The following example shows that the inclusion is strict. In fact it seems like P-hyperconvexity is much weaker than strict hyperconvexity.
\begin{exa}
Let $\Omega$ be a bounded pseudoconvex domain with $C^1$-boundary. By Kerzman and Rosay \cite{KR},  any such domain is hyperconvex, and thus $\Omega$ has a negative plurisubharmonic exhaustion function $\psi \in \PSH\Omega\cap C(\bar\Omega)$. By Forn\ae{}ss and Wiegerinck  \cite{FW}, $\PSH U \cap C(\bar U)=\PSH{\bar{U}} \cap C(\bar U)$, for any bounded domain $U$ with $C^1$-boundary. Putting these results together, we see that $\Omega$ is P-hyperconvex. This means in particular that the worm domain of Diederich and Forn\ae{}ss (see Example \ref{ex:worm}) is P-hyperconvex.
\end{exa}

The following proposition gives a geometric description of the boundary of a P-hyperconvex domain.
\begin{prop}\label{thm:analytic_d_boundary}
	Let $\Omega$ be a P-hyperconvex domain and let $f$ be a continuous mapping from $\bar\D	$ to $\bar\Omega$ that is holomorphic on $\D$. If there is a point $\zeta_0 \in \D$ such that $f(\zeta_0) \in \partial \Omega$, then $f(\bar \D) \subset \partial \Omega$.
\end{prop}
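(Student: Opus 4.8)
The plan is to pull back a continuous plurisubharmonic exhaustion function of the compact set $\bar\Omega$ by $f$ and invoke the maximum principle for subharmonic functions on $\D$. Since $\Omega$ is P-hyperconvex, fix a non-constant $\phi \in \PSH{\bar\Omega} \cap C(\bar\Omega)$ with $\phi \equiv 0$ on $\partial\Omega$. By Example \ref{ex:pshinterior} the restriction $\phi|_\Omega$ is plurisubharmonic on $\Omega$ in the ordinary sense, so since $\Omega$ is bounded and $\phi$ is continuous on $\bar\Omega$, the classical maximum principle gives $\phi \leq 0$ on $\bar\Omega$. Moreover $\phi$ cannot attain the value $0$ at a point of $\Omega$: otherwise it would attain its maximum at an interior point of the connected set $\Omega$ and hence be identically $0$ on $\bar\Omega$, contradicting non-constancy. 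Thus
\[
\{z \in \bar\Omega : \phi(z) = 0\} = \partial\Omega .
\]

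The key step is to show that $g := \phi \circ f$ is subharmonic on $\D$. By Theorem \ref{thm:approx} there is a sequence $\phi_j \in \PSHo{\bar\Omega}$ with $\phi_j \searrow \phi$ on $\bar\Omega$, and each $\phi_j$ is the restriction to $\bar\Omega$ of some $\tilde\phi_j \in \PSH{U_j} \cap C(U_j)$ for an open set $U_j \supset \bar\Omega$. Since $f$ is holomorphic on $\D$ with $f(\D) \subset \bar\Omega \subset U_j$, the composition $\tilde\phi_j \circ f$ is subharmonic on $\D$ and continuous on $\bar\D$; and because $f$ takes values in $\bar\Omega$ we have $\tilde\phi_j \circ f = \phi_j \circ f$ there. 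Letting $j \to \infty$, the functions $\phi_j \circ f$ decrease to $g$, which is continuous on $\bar\D$ and therefore bounded below, so $g$ is subharmonic on $\D$ as a decreasing limit of subharmonic functions.

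Now $g \leq 0$ on $\bar\D$ because $\phi \leq 0$ on $\bar\Omega$, while $g(\zeta_0) = \phi(f(\zeta_0)) = 0$ since $f(\zeta_0) \in \partial\Omega$. So $g$ attains its maximum over $\bar\D$ at the interior point $\zeta_0 \in \D$, and the maximum principle forces $g \equiv 0$ on the connected open set $\D$, hence $g \equiv 0$ on $\bar\D$ by continuity. Therefore $\phi(f(\zeta)) = 0$ for every $\zeta \in \bar\D$, and by the displayed identity this means $f(\bar\D) \subset \partial\Omega$.

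The only subtle point is the subharmonicity of $g$, which is taken care of by the approximation Theorem \ref{thm:approx}; note that $f$ is merely continuous on $\partial\D$, so one can only assert that $g$ is subharmonic on $\D$ — but that is precisely where it is needed, since the hypothesis places $\zeta_0$ in $\D$. Everything else is the classical maximum principle.
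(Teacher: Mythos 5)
Your proof is correct and follows essentially the same route as the paper's: approximate the exhaustion function $\phi\in\PSH{\bar\Omega}$ by $\phi_j\in\PSHo{\bar\Omega}$ via Theorem \ref{thm:approx}, compose with $f$ to get a decreasing sequence of subharmonic functions on $\D$, and apply the maximum principle to $\phi\circ f$. The only difference is that you carefully justify $\{\phi=0\}\cap\bar\Omega=\partial\Omega$, which the paper takes for granted; this is a welcome but minor addition.
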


\begin{proof}
	Since $\Omega$ is P-hyperconvex, there is a function $\psi \in \PSH{\bar\Omega}\cap C(\bar\Omega)$ such that $\psi(z)<0$ in $\Omega$ and $\psi(z)=0$ for all $z \in \partial \Omega$. Due to Theorem \ref{thm:approx}, there are $\psi_j \in \PSHo{\bar\Omega}$ such that $\psi_j \searrow \psi$ on $\bar\Omega$. We now let $u_j:=\psi_j\circ f$ and $u:=\psi\circ f$. Then $u_j$ is subharmonic on $\D$ and since $u_j \searrow u$ on $\D$, $u$ is also subharmonic on $\D$. Since $\sup u\leq\sup\psi = 0$ and $u(\zeta)=0$ it follows from the maximum principle that $u \equiv 0$. This means that $f(\bar\D)\subset \partial \Omega$.
\end{proof}
	An example by Carlehed, Cegrell and Wikstr\"om \cite[Example 2.11]{CCW}, shows that Proposition \ref{thm:analytic_d_boundary} can not be used to fully characterize P-hyperconvex domains. The following example shows however, that in some cases, the proposition can be used to distinguish between a P-hyperconvex domain, and a domain that is merely pseudoconvex. 
\begin{exa}
	Let $\Omega$ be the so-called Hartogs triangle defined by
	\[
		\Omega:=\{(z,w)\in \C^2: \abs{z}<\abs{w} <1\}.
	\]
	Since $\phi: (z,w)\mapsto(\frac{z}{w},w)$ is a biholomorphic mapping from $\Omega$ to the pseudoconvex domain $\{(z,w): \abs{z} < 1, 0 < \abs{w} < 1\}$, the domain $\Omega$ is pseudoconvex. However, the mapping $f: \D \to \bar\Omega$ given by $\zeta \mapsto (0,\zeta)$ is an anlytic disk with $f(0)\in \partial \Omega$, but $f(\bar\D)\not\subset \partial \Omega$. By Proposition \ref{thm:analytic_d_boundary}, $\Omega$ is not P-hyperconvex.
\end{exa}
\begin{rem}
	After some extra thought, one realizes that the Hartogs triangle is not even hyperconvex.
\end{rem}
\begin{prop} \label{prop:snitt}
Let $\Omega_1$ and $\Omega_2$ be P-hyperconvex domains in $\C^n$, then $\Omega:=\Omega_1 \cap \Omega_2$ is P-hyperconvex.
\end{prop}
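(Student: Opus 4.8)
The plan is to produce a negative plurisubharmonic exhaustion function for $\Omega$ by taking the maximum of exhaustion functions for $\Omega_1$ and $\Omega_2$. Since P-hyperconvexity is a property of a domain, I may assume $\Omega:=\Omega_1\cap\Omega_2$ is connected and nonempty (otherwise there is nothing to prove, or one runs the argument on each connected component; the function constructed below is defined on all of $\overline{\Omega_1\cap\Omega_2}$, so this causes no trouble). Two elementary facts will be used: $\bar{\Omega}\subset\bar{\Omega_1}\cap\bar{\Omega_2}$, which is immediate from $\Omega\subset\Omega_i$, and $\partial\Omega\subset\partial\Omega_1\cup\partial\Omega_2$ --- indeed, if $z\in\partial\Omega$ then $z\in\bar{\Omega_1}\cap\bar{\Omega_2}$ while $z\notin\Omega_1\cap\Omega_2$, so $z$ fails to lie in some $\Omega_j$ and hence lies in $\partial\Omega_j$.

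As in the proof of Proposition~\ref{thm:analytic_d_boundary}, for $i=1,2$ choose $\psi_i\in\PSH{\bar{\Omega_i}}\cap C(\bar{\Omega_i})$ with $\psi_i<0$ on $\Omega_i$ and $\psi_i=0$ on $\partial\Omega_i$, and set $\psi:=\max\{\psi_1,\psi_2\}|_{\bar{\Omega}}$, a continuous function on $\bar{\Omega}$. The main step is to check $\psi\in\PSH{\bar{\Omega}}$. By Theorem~\ref{thm:approx} write $\psi_{i,j}\searrow\psi_i$ with $\psi_{i,j}\in\PSHo{\bar{\Omega_i}}$, say $\psi_{i,j}$ extending to a continuous plurisubharmonic function $\tilde{\psi_{i,j}}$ on an open set $U_{i,j}\supset\bar{\Omega_i}$. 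Then $U_{1,j}\cap U_{2,j}$ is an open neighborhood of $\bar{\Omega_1}\cap\bar{\Omega_2}\supset\bar{\Omega}$ on which $\max\{\tilde{\psi_{1,j}},\tilde{\psi_{2,j}}\}$ is continuous and plurisubharmonic, so $\max\{\psi_{1,j},\psi_{2,j}\}|_{\bar{\Omega}}\in\PSHo{\bar{\Omega}}$. Since this sequence decreases pointwise to $\psi$ and $\psi$ is upper semicontinuous, Theorem~\ref{thm:approx} gives $\psi\in\PSH{\bar{\Omega}}$.

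Finally I would check that $\psi$ is a negative exhaustion function. On $\Omega=\Omega_1\cap\Omega_2$ both $\psi_1,\psi_2<0$, so $\psi<0$; and $\partial\Omega\ne\emptyset$ because $\Omega$ is bounded and nonempty, while for $z\in\partial\Omega$ we have $z\in\partial\Omega_j$ for some $j$, so $\psi_j(z)=0$, and the other $\psi_i(z)\le 0$ since $z\in\bar{\Omega_i}$; hence $\psi(z)=0$. Thus $\psi$ is non-constant, and $\Omega$ is P-hyperconvex. I do not expect a real obstacle here; the only points requiring care are that $\overline{\Omega_1\cap\Omega_2}$ may be a proper subset of $\bar{\Omega_1}\cap\bar{\Omega_2}$ (which is why $\psi$ is defined by restriction, and why the boundary inclusion above is stated for $\Omega$ itself), and that membership in $\PSHo{\bar{\Omega}}$ requires the approximants to be defined on a full neighborhood of $\bar{\Omega}$, which is arranged by intersecting the neighborhoods $U_{i,j}$.
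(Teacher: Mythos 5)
Your proof is correct and follows exactly the paper's approach: the paper's own proof consists of the single observation that $\max\{\psi_1,\psi_2\}$ is a negative plurisubharmonic exhaustion function for $\Omega$ belonging to $\PSH{\bar\Omega}$. You have merely supplied the routine verifications (approximation by elements of $\PSHo{\bar\Omega}$ via Theorem~\ref{thm:approx}, the boundary inclusion $\partial\Omega\subset\partial\Omega_1\cup\partial\Omega_2$, and the vanishing on $\partial\Omega$) that the paper leaves implicit.
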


\begin{proof}
Let $\psi_1 \in \PSH{\bar {\Omega}_1}$ and $\psi_2 \in \PSH{\bar {\Omega}_2}$ be the exhaustion functions for $\Omega_1$ respectively $\Omega_2$. Then the function
\[
	\psi(z):=\max\{\psi_1(z), \psi_2(z)\},
\]
is an negative plurisubharmonic exhaustion function for $\Omega$ in $\PSH{\bar\Omega}$.
\end{proof}

\begin{rem}
By a similar argument as in Proposition \ref{prop:snitt} we can observe that if $\Omega_1$ and $\Omega_2$ are P-hyperconvex domains, so is $\Omega=\Omega_1 \times \Omega_2$.
\end{rem}

In the same way as hyperconvexity can be fully characterized by the support of a class of measures (see Theorem \ref{thm:jensen_supp}), the notion of P-hyperconvexity has a corresponding characterization using the measures $\J{z}{\bar \Omega}$, for $z \in \partial \Omega$.

\begin{thm}[Theorem \ref{thm:a}] \label{thm:phxb}
Let $\Omega \Subset \C^n$ be a domain. The following are equivalent:
\begin{enumerate}
\item $\Omega$ is P-hyperconvex;
\item for every $z \in \partial \Omega$ and every $\mu \in \J{z}{\bar \Omega}$, we have that $\supp(\mu) \subset \partial \Omega$;
\item for every $z \in \partial \Omega$ there exists a non-constant function $\varphi \in \PSH{\bar \Omega}$ such that $\varphi \leq 0$ and $\varphi(z)=0$;
\item for every $z \in \partial \Omega$ there exists a neighborhood $V_z$ such that $\Omega \cap V_z$ is P-hyperconvex.
\end{enumerate}
\end{thm}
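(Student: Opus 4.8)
The plan is to prove the cycle of implications $(1)\Rightarrow(2)\Rightarrow(3)\Rightarrow(4)\Rightarrow(1)$, using Theorem \ref{thm:approx} (approximation by functions in $\PSHo{\bar\Omega}$), Edwards' theorem (Theorem \ref{thm:Edwards}), the weak-$*$ compactness of $\J{z}{\bar\Omega}$, and the localization theorem (Theorem \ref{thm:Loc}) at the last step.

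For $(1)\Rightarrow(2)$: let $\psi\in\PSH{\bar\Omega}\cap C(\bar\Omega)$ be the exhaustion function with $\psi<0$ on $\Omega$ and $\psi=0$ on $\partial\Omega$. Fix $z\in\partial\Omega$ and $\mu\in\J{z}{\bar\Omega}$. Since $\psi\in\PSH{\bar\Omega}$, by Theorem \ref{thm:approx} there are $\psi_j\in\PSHo{\bar\Omega}$ with $\psi_j\searrow\psi$; each $\psi_j$ is tested by every Jensen measure, so $\psi_j(z)\leq\int\psi_j\,d\mu$, and letting $j\to\infty$ (monotone/dominated convergence) gives $0=\psi(z)\leq\int\psi\,d\mu\leq 0$. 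Hence $\int\psi\,d\mu=0$, and since $\psi<0$ on the open set $\Omega$ while $\psi\leq 0$ everywhere, $\mu$ must give zero mass to $\{\psi<0\}\supset\Omega$; therefore $\supp(\mu)\subset\{\psi=0\}\subset\partial\Omega$.

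For $(2)\Rightarrow(3)$: fix $z\in\partial\Omega$. Pick a point $w_0\in\Omega$ and a small ball $B$ around $w_0$ with $\bar B\subset\Omega$, and let $\phi$ be a lower semicontinuous function on $\bar\Omega$ that is, say, $0$ off $B$ and $-1$ on a smaller ball (more simply, take a continuous $\phi\leq 0$, $\phi\not\equiv 0$, supported in $\bar B$). By Edwards' theorem (Theorem \ref{thm:Edwards}),
\[
\sup\{\varphi(z):\varphi\in\PSHo{\bar\Omega},\ \varphi\leq\phi\}=\inf\Bigl\{\int\phi\,d\mu:\mu\in\J{z}{\bar\Omega}\Bigr\}.
\]
By $(2)$ every $\mu\in\J{z}{\bar\Omega}$ has support in $\partial\Omega$, where $\phi=0$, so the right-hand side equals $0$; thus there is $\varphi\in\PSHo{\bar\Omega}\subset\PSH{\bar\Omega}$ with $\varphi\leq\phi\leq 0$ and $\varphi(z)$ arbitrarily close to $0$. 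Taking $\max$ with the constant function $-1$ (still in $\PSH{\bar\Omega}$) and, if necessary, replacing $\varphi$ by the maximum of finitely many such functions or applying a small monotone limit argument, we obtain $\varphi\in\PSH{\bar\Omega}$ with $\varphi\leq 0$, $\varphi(z)=0$; and $\varphi$ is non-constant since $\varphi\leq\phi<0$ somewhere in $B$. (The passage from ``$\varphi(z)$ close to $0$'' to ``$\varphi(z)=0$'' is the one point needing care: one either uses that the supremum is attained — by weak-$*$ compactness of $\J{z}{\bar\Omega}$ there is $\mu_z$ with $\int\phi\,d\mu_z=0$, and then regularizes the upper envelope $F=\sup\{\varphi:\varphi\in\PSHo{\bar\Omega},\varphi\leq\phi\}$, which by Example \ref{ex:monotone_seq} lies in $\PSH{\bar\Omega}$ after taking a decreasing limit — or else builds $\varphi$ as a series $\sum 2^{-k}\varphi_k$ with $\varphi_k(z)\to 0$; this is the main technical obstacle.)

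For $(3)\Rightarrow(4)$: this is essentially immediate, since the defining function in $(3)$ restricted to $\Omega\cap V_z$ for any neighborhood $V_z$ (together with the observation that $z$ can be joined to all nearby boundary points, or rather that $(3)$ with $V_z=\C^n$ already gives a global function) shows $\Omega\cap V_z$ inherits a boundary-vanishing plurisubharmonic function; more cleanly, one notes $(3)$ for $\Omega$ at every $z$ yields, by taking suitable maxima over a finite cover of $\partial\Omega$, a single exhaustion function, but to stay local one just restricts. Finally $(4)\Rightarrow(1)$: given that each point of $\bar\Omega$ has a neighborhood $V_z$ with $\Omega\cap V_z$ P-hyperconvex, hence (by Example \ref{ex:pshinterior}) with a local negative continuous plurisubharmonic exhaustion function, apply the localization theorem (Theorem \ref{thm:Loc}) to patch these into a function in $\PSH{\bar\Omega}\cap C(\bar\Omega)$ vanishing on $\partial\Omega$; one must check the patched function is non-constant and genuinely vanishes on all of $\partial\Omega$, which follows by choosing the local functions to agree in order of magnitude near the boundary (e.g. comparable to $\max$ of the local exhaustions) and invoking Theorem \ref{thm:hx_lokal} to know $\Omega$ is at least hyperconvex so a global continuous negative exhaustion exists to compare against. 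I expect the delicate points to be (i) upgrading the ``near $0$'' envelope in $(2)\Rightarrow(3)$ to an exact value while preserving membership in $\PSH{\bar\Omega}$, and (ii) the gluing in $(4)\Rightarrow(1)$, where one has to produce a single function vanishing on the whole boundary from purely local data, for which Gauthier's localization (Theorem \ref{thm:Loc}) is the essential tool.
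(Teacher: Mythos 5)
Your cycle $(1)\Rightarrow(2)\Rightarrow(3)\Rightarrow(4)\Rightarrow(1)$ differs from the paper's $(1)\Rightarrow(4)\Rightarrow(3)\Rightarrow(2)\Rightarrow(1)$, and the reordering is what creates the two genuine gaps. First, in $(2)\Rightarrow(3)$ you correctly identify that Edwards' theorem only gives $\varphi\in\PSHo{\bar\Omega}$ with $\varphi(z)$ \emph{close} to $0$, but none of your proposed repairs closes this: the series $\sum 2^{-k}\varphi_k$ still takes a strictly negative value at $z$ unless some $\varphi_k(z)=0$ already; Example \ref{ex:monotone_seq} concerns \emph{decreasing} limits, whereas the envelope $F=\sup\{\varphi\in\PSHo{\bar\Omega}:\varphi\leq\phi\}$ is an increasing supremum, and showing that its upper semicontinuous regularization belongs to $\PSH{\bar\Omega}$ is precisely the kind of statement that in this paper is derived \emph{from} Theorem A (via Theorem \ref{thm:psh_boundary_coin} in the proof of Corollary \ref{thm:ppsh_ext}), so invoking it here is circular. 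The clean route is to prove $(2)\Rightarrow(1)$ directly, as the paper does: the measures tested against $\PSH{\Omega}$ form a subclass of $\J{z}{\bar\Omega}$, so $(2)$ and Theorem \ref{thm:jensen_supp} give hyperconvexity, hence a continuous negative exhaustion $\psi$ with $\psi|_{\partial\Omega}=0$; then Theorem \ref{thm:bvalues} together with $(2)$ shows $\psi\in\PSH{\bar\Omega}$, and $(3)$ follows trivially from $(1)$.

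Second, your $(3)\Rightarrow(4)$ does not work as described: the function $\varphi$ furnished by $(3)$ vanishes only at the single point $z$, so its restriction to $\Omega\cap V_z$ is not an exhaustion function for that piece, and a maximum of finitely many such functions vanishes only at the finitely many chosen points --- ``$\varphi(w)=0$'' is a closed condition, so compactness of $\partial\Omega$ yields no finite subcover argument. This implication has to be routed through $(2)$ and $(1)$ (note $(3)\Rightarrow(2)$ is the easy integration argument, which the paper uses). Your $(4)\Rightarrow(1)$ has the right ingredients (Theorem \ref{thm:hx_lokal} for hyperconvexity, Theorem \ref{thm:Loc} for localization) but misdescribes the mechanism: Theorem \ref{thm:Loc} does not patch distinct local functions into a global one; rather, one fixes the single global exhaustion $\psi$ coming from hyperconvexity and verifies $\psi\in\PSH{\bar{\Omega\cap B_{z_0}}}$ for small balls via Theorem \ref{thm:bvalues}, which requires analyzing the Jensen measures of the local pieces --- including showing that $\J{w}{\bar{\Omega\cap B_{z_0}}}=\{\delta_w\}$ for $w\in\Omega\cap\partial B_{z_0}$, a point your sketch omits entirely. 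Your $(1)\Rightarrow(2)$ is correct (indeed it needs no approximation step, since $\psi\in\PSH{\bar\Omega}$ already satisfies the Jensen inequality by definition).
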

\begin{proof}
$(1) \Rightarrow (4)$ Follows from Proposition \ref{prop:snitt}.

$(4) \Rightarrow (3)$ It follows from Theorem \ref{thm:hx_lokal} that $\Omega$ is hyperconvex. Hence, there exists $\psi \in \PSH{\Omega}\cap C(\bar \Omega)$, $\psi \not\equiv 0,$ such that $\psi|_{\partial \Omega}=0$. We want to show that $\psi \in \PSH{\bar \Omega}$. By Theorem \ref{thm:Loc} it is enough to show that for every $z_0 \in \bar \Omega$, there is a ball $B_{z_0}$ such that $\psi \in \PSH{\bar{\Omega \cap B_z}}$. For $z_0 \in \Omega$ this is obviously true, so it is enough to consider $z_0 \in \partial \Omega$. From now on, let $z_0$ be a point in $\partial \Omega$ and $B_{z_0}$ a small ball centered at $z_0$ such that $B_{z_0} \Subset V_{z_j}$, for some $j$. By Theorem \ref{thm:bvalues}, it is enough to show that for $z \in \partial(\Omega\cap B_{z_0})$,
\begin{equation}\label{eq:locally}
	\psi(z)\leq \int \psi \, d\mu, \quad \forall \mu \in \J{z}{\bar{\Omega \cap B_{z_0}}}.
\end{equation}
Suppose first that $z \in \partial \Omega \cap B_{z_0}$. Since $\psi \equiv 0$ on $\partial \Omega$, the inequality \eqref{eq:locally} will be shown to hold if we can show that every $\mu \in \J{z}{\bar{\Omega \cap B_{z_0}}}$ only has support on $\partial \Omega$. Since $\Omega \cap V_{z_j}$ is P-hyperconvex, it has an exhaustion function $\varphi \in \PSH{\bar{\Omega \cap {V}_{z_j}}}$. Suppose  $\mu \in \mathcal{J}_z(\bar{\Omega \cap B_{z_0}})$, then
$$0=\varphi(z)\leq \int \varphi \, d\mu \leq 0.$$
Hence, $\mu$ has only support where $\varphi=0$, that is on $\partial \Omega$.\\

Now suppose that $z \in \Omega \cap \partial B_{z_0}$. In this case inequality \eqref{eq:locally} holds, since $\mathcal{J}_{z}(\bar{\Omega \cap B_{z_0}})=\{\delta_z\}$. To see this, we begin by writing $z$ on the form $z=z_0+rv$, for some unit vector $v$ and some real $r>0$ and define the function
\[
	u(\zeta)=\abs{\zeta-z_0+rv}-2r.
\]
For any $\mu \in \mathcal{J}_{z}(\bar{\Omega \cap B_{z_0}})$ it now holds that
\[
	0=u(z) \leq \int u \, d\mu \leq 0,
\]
and since $u$ is strictly negative on $\bar{\Omega \cap B_{z_0}}\setminus \{z\}$, it follows that $\mu = \delta_z$.

$(3) \Rightarrow (2)$ Assume that  $z \in \partial \Omega$ and $\mu \in \J{z}{\bar \Omega}$. By assumption there exists a non-constant function $\varphi \in \PSH{\bar \Omega}$ such that $\varphi \leq 0$ and $\varphi(z)=0$. Then
\[
0=\varphi(z) \leq \int_{\bar \Omega}\varphi \, d\mu \leq 0,
\]
and since $\phi < 0$ in $\Omega$, $\supp(\mu) \subset \partial \Omega$.

$(2) \Rightarrow (1)$ Now assume that for $z \in \partial \Omega$, every measure $\mu \in \J{z}{\bar \Omega}$ is supported on $\partial \Omega$. Then it follows from Theorem \ref{thm:jensen_supp} that $\Omega$ is hyperconvex, and hence $\Omega$ has a negative exhaustion function $\psi \in \PSH{\Omega} \cap C(\bar \Omega)$. We want to show that $\psi \in \PSH{\bar \Omega}$. By Theorem \ref{thm:bvalues} it is enough to show that for every $z \in \partial \Omega$ we have that
\[
\psi(z) \leq \int \psi \, d\mu \quad \forall \ \mu \in \J{z}{\bar \Omega},
\]
and this follows directly from the assumption that all Jensen measures are supported on $\partial \Omega$.
\end{proof}
This theorem, together with Theorem \ref{thm:bvalues}, shows that on P-hyperconvex domains, the question of whether a function $u \in \PSH{\Omega}\cap\USC{\bar\Omega}$, belongs to $\PSH{\bar\Omega}$ is really localized to the boundary. This observation is so important that we state it as a theorem.
\begin{thm}[Theorem \ref{thm:c}]\label{thm:psh_boundary_coin}
	Suppose that  $\Omega$ is a P-hyperconvex domain and that $u \in \PSH{\Omega}\cap\USC{\bar\Omega}$. If there is a function $v \in \PSH{\bar\Omega}$ such that $u|_{\partial \Omega}=v|_{\partial \Omega}$, then $u\in \PSH{\bar\Omega}$.
\end{thm}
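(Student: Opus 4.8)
The plan is to reduce the statement to two structural facts that are already available: Theorem~\ref{thm:bvalues}, which says that membership in $\PSH{\bar\Omega}$ can be detected by testing the Jensen inequality only at boundary points, and the equivalence $(1)\Leftrightarrow(2)$ of Theorem~\ref{thm:phxb}, which says that on a P-hyperconvex domain every Jensen measure for a point of $\partial\Omega$ is carried by $\partial\Omega$. Once both are invoked, essentially nothing remains to be done.

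Concretely, I would fix $z\in\partial\Omega$ and an arbitrary $\mu\in\J{z}{\bar\Omega}$; by Theorem~\ref{thm:bvalues} (whose hypothesis $u\in\PSH{\Omega}\cap\USC{\bar\Omega}$ is exactly what we are given) it suffices to verify $u(z)\le\int u\,d\mu$ for all such $z$ and $\mu$. Since $\Omega$ is P-hyperconvex, Theorem~\ref{thm:phxb} gives $\supp(\mu)\subset\partial\Omega$. Now use that $v\in\PSH{\bar\Omega}$, so that $v(z)\le\int v\,d\mu$; because $\mu$ is supported on $\partial\Omega$ and $u\equiv v$ there, we have $\int v\,d\mu=\int u\,d\mu$, while $v(z)=u(z)$ because $z\in\partial\Omega$. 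Chaining these, $u(z)=v(z)\le\int v\,d\mu=\int u\,d\mu$, which is precisely the inequality required by Theorem~\ref{thm:bvalues}. Hence $u\in\PSH{\bar\Omega}$.

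A few routine points deserve a line in passing: $u$ is upper semicontinuous on the compact set $\bar\Omega$, hence bounded above, so $\int u\,d\mu\in[-\infty,\infty)$ is well defined against the probability measure $\mu$, and the same holds for $v$; and the identity $\int_{\bar\Omega}v\,d\mu=\int_{\partial\Omega}v\,d\mu=\int_{\partial\Omega}u\,d\mu=\int_{\bar\Omega}u\,d\mu$ is just the fact that $\mu$ is concentrated on $\partial\Omega$ together with $u|_{\partial\Omega}=v|_{\partial\Omega}$.

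I do not expect a genuine obstacle here; the real work was already carried out in proving Theorem~\ref{thm:phxb}. The point worth stressing is conceptual rather than technical: the hypothesis only pins down $u$ on $\partial\Omega$ and says nothing about how $u$ compares with $v$ in the interior, and P-hyperconvexity is exactly the property that renders this harmless, since it forces every boundary Jensen measure to ignore the interior. Without it the conclusion can fail, which is the content of the G\"og\"us example mentioned in the introduction.
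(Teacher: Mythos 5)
Your proof is correct and is exactly the argument the paper intends: the paper states this theorem as an immediate consequence of Theorem \ref{thm:bvalues} (testing only at boundary points) combined with the implication $(1)\Rightarrow(2)$ of Theorem \ref{thm:phxb} (boundary Jensen measures are carried by $\partial\Omega$), which is precisely your chain $u(z)=v(z)\le\int v\,d\mu=\int u\,d\mu$.
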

\begin{rem}
	If $\Omega$ is a hyperconvex domain for which the conclusion of the theorem holds, then $\Omega$ is also P-hyperconvex. Inded, since the constant function $v=0$ is plurisubharmonic on $\bar\Omega$, any negative plurisubharmonic exhaustion function of $\Omega$ has to be plurisubharmonic on $\bar\Omega$. In this sense this theorem characterizes P-hyperconvexity among hyperconvex domains.
\end{rem}
A special case of the previous theorem is when $u$ and $v$ are negative plurisubharmonic exhaustion functions for $\Omega$. The definition of P-hyperconvexity only demands the existence of one negative plurisubharmonic exhaustion function that belongs to $\PSH{\bar\Omega}$, but according to this theorem, it is equivalent to demand that all such exhaustion functions belong to $\PSH{\bar\Omega}$.

\begin{cor}[Theorem \ref{thm:mainapprox}]
Suppose that $\Omega$ is P-hyperconvex and that $u \in \PSH{\Omega}$. For every relatively compact set $E \Subset \Omega$, there is a sequence $u_j \in \PSHo{\bar\Omega}$ such that $u_j(z) \searrow u(z)$ for every $z \in E$. Moreover, if $u$ is bounded from above, one can chose $E:=\Omega$.
\end{cor}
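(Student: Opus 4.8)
The proof naturally splits according to whether $u$ is bounded above.

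\textbf{The bounded case.} Suppose first that $u$ is bounded from above; after subtracting a constant we may assume $u<0$ on $\Omega$. I would extend $u$ to $\bar\Omega$ by setting $\hat u:=u$ on $\Omega$ and $\hat u:=0$ on $\partial\Omega$. Since $u<0$ inside, every boundary point is approached only by nonpositive values, so $\hat u\in\USC{\bar\Omega}$, and trivially $\hat u\in\PSH{\Omega}$. As the constant function $0$ lies in $\PSH{\bar\Omega}$ and $\hat u|_{\partial\Omega}=0$, Theorem~\ref{thm:psh_boundary_coin} applies and gives $\hat u\in\PSH{\bar\Omega}$. Theorem~\ref{thm:approx} together with the remark after it then furnishes smooth $u_j\in\PSHo{\bar\Omega}$ with $u_j\searrow\hat u$ on $\bar\Omega$; adding back the constant and restricting to $\Omega$ proves the ``moreover'' part with $E=\Omega$.

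\textbf{The general case: set-up.} Now let $u\in\PSH{\Omega}$ be arbitrary and fix $E\Subset\Omega$. Let $\psi\in\PSH{\bar\Omega}\cap C(\bar\Omega)$ be a negative plurisubharmonic exhaustion function for $\Omega$, and, using Theorem~\ref{thm:approx}, choose $\psi_k\in\PSHo{\bar\Omega}$ with $\psi_k\searrow\psi$, extended to continuous plurisubharmonic $\tilde\psi_k$ on open neighbourhoods of $\bar\Omega$. Put $c_E:=\max_E\psi<0$ and fix $c$ with $c_E<c<0$; since $\psi=0$ on $\partial\Omega$, the set $\{\psi\le c\}$ is a compact subset of $\Omega$, so I may choose an open $V$ with $\{\psi\le c\}\subset V$ and $\bar V\Subset\Omega$. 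Standard regularisation gives, for a sequence $\epsilon_j\downarrow 0$ small enough that $u*\rho_{\epsilon_1}$ is defined on $\bar V$, smooth plurisubharmonic functions $w_j:=u*\rho_{\epsilon_j}$ on $V$ with $w_j\searrow u$ pointwise on $V$; in particular $w_j$ is nonincreasing on $E$ with limit $u$, and $w_j\le\beta:=\sup_{\bar V}w_1$ on $\bar V$ for every $j$.

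\textbf{The gluing (the main obstacle).} For each $j$ I want to glue $w_j$ — which lives only on $V$, a neighbourhood of $\{\psi\le c\}$ — to a global ``barrier'' $t_j\tilde\psi_{k_j}+s_j$ with $t_j>0$, $s_j\in\R$ and $k_j$ suitably large. Writing $W$ for an open neighbourhood of $\bar\Omega$ on which $\tilde\psi_{k_j}$ is defined, set
\[
u_j:=\begin{cases}\max\{\,w_j,\ t_j\tilde\psi_{k_j}+s_j\,\}&\text{on }V,\\ t_j\tilde\psi_{k_j}+s_j&\text{on }W\setminus\{\psi\le c\}.\end{cases}
\]
These two open sets cover the open neighbourhood $V\cup(W\setminus\{\psi\le c\})$ of $\bar\Omega$; on their overlap $\{z\in V:\psi(z)>c\}$ one has $\tilde\psi_{k_j}=\psi_{k_j}\ge\psi>c$, so taking $s_j:=\beta-t_jc$ makes the barrier exceed $w_j$ on the overlap, the two formulas agree, and $u_j$ is therefore continuous and plurisubharmonic on a neighbourhood of $\bar\Omega$, i.e. $u_j\in\PSHo{\bar\Omega}$. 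On the other hand, by Dini's theorem $\psi_k\to\psi$ uniformly on the compact set $E$, so for $k_j$ large enough $\tilde\psi_{k_j}\le\tfrac{c+c_E}{2}$ on $E$; then on $E$ the barrier is at most $\beta-t_j\tfrac{c-c_E}{2}$, which is $\le\min_E w_j$ once $t_j$ is taken large enough (a finite threshold for each fixed $j$). For such $t_j$ one has $u_j\equiv w_j$ on $E$, and hence $u_j\searrow u$ on $E$, as required. The crux is exactly this bookkeeping: the barrier must dominate $w_j$ on the ``collar'' $\{z\in V:\psi(z)>c\}$ so that the pieces match and the function survives past $\partial\Omega$, yet be dominated by $w_j$ on $E$ so that the sequence still decreases to $u$ there — and one is forced to work with the approximants $\tilde\psi_k$ because P-hyperconvexity only supplies $\psi\in\PSH{\bar\Omega}$, not $\psi\in\PSHo{\bar\Omega}$. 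The remaining points (upper semicontinuity of $\hat u$, monotonicity of $\epsilon\mapsto u*\rho_\epsilon$, passing from continuous to smooth approximants by a further diagonalisation if desired, and the harmlessness of the sublevel sets being possibly disconnected, since they enter only as compact sets) are routine.
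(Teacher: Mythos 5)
Your proof is correct. The ``moreover'' part coincides with the paper's own argument: extend $u$ by its supremum to $\partial\Omega$, note that the boundary values are those of a constant (which lies in $\PSH{\bar\Omega}$), apply Theorem \ref{thm:psh_boundary_coin} and then Theorem \ref{thm:approx}. For the first statement, however, you take a genuinely different route. The paper never constructs the approximants individually: it forms the single auxiliary function $\tilde{u}=\max\{K(\psi+\epsilon),u-M\}$ on $\Omega_\epsilon$, extended by $K(\psi+\epsilon)$ on $\Omega\setminus\Omega_\epsilon$, observes that this is plurisubharmonic on $\Omega$ with constant boundary values, concludes $\tilde{u}\in\PSH{\bar\Omega}$ from Theorem \ref{thm:psh_boundary_coin}, and then lets Theorem \ref{thm:approx} deliver the whole decreasing sequence at once (on $E$ one has $\tilde{u}=u-M$, so shifting back by $M$ finishes). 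You instead build each $u_j$ by hand, gluing a mollification $u*\rho_{\epsilon_j}$ near the sublevel set $\{\psi\le c\}$ to the barrier $t_j\tilde{\psi}_{k_j}+s_j$ manufactured from the outer approximants of the exhaustion function; your bookkeeping (barrier above $w_j$ on the collar $\{z\in V:\psi(z)>c\}$ so the pieces match across $\partial V$ and past $\partial\Omega$, below $w_j$ on $E$ so the sequence still decreases to $u$ there) is the Forn\ae{}ss--Wiegerinck gluing transplanted to the $\PSH{\bar\Omega}$ setting, and it checks out. What the paper's route buys is brevity and the conceptual point that the corollary is a formal consequence of Theorem \ref{thm:c}; what your route buys is explicit approximants and the observation that Theorem \ref{thm:psh_boundary_coin} is not actually needed for the first statement --- only Theorem \ref{thm:approx} applied to $\psi$ itself. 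In both arguments P-hyperconvexity enters in the same way: the exhaustion function must admit approximants defined on neighborhoods of $\bar\Omega$, which mere hyperconvexity does not provide.
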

\begin{rem}
	This generalizes Theorem 2 in \cite{FW}.
\end{rem}
\begin{proof}
	For the first statement, let $\psi \in \PSH{\bar\Omega}\cap C(\bar{\Omega})$ be a negative plurisubharmonic exhaustion function for $\Omega$ and let $\epsilon>0$ be so small that $E \subset \{z \in \Omega: \psi(z)< -\epsilon\}=:\Omega_\epsilon$. Then let $M$ be such that $u < M$ on $\Omega_\epsilon$ and let $K >0$ be such that $K(\psi+\epsilon) < u-M$ on $E$. The function $\tilde u$ defined by
\[
	\tilde u(z) :=
	\begin{cases}
		\max\{K(\psi(z)+\epsilon), u-M\},& \mbox{if $z \in \Omega_\epsilon$}, \\
		K(\psi(z)+\epsilon),& \mbox{if $z \in \Omega\setminus \Omega_\epsilon$},
	\end{cases}
\]
will be plurisubharmonic in $\Omega$, and since it has constant boundary values and $\Omega$ is P-hyperconvex, it follows from Corollary \ref{thm:psh_boundary_coin} that $\tilde u \in \PSH{\bar\Omega}$. This proves the first statement of the theorem.
	
	Let us now assume that $u$ is bounded from above. It then makes sense to define  $M:=\sup u$ and let
	\[
		\tilde u(z):=
		\begin{cases}
			u(z), & \mbox{if $z \in \Omega$}, \\
			M,	& \mbox{if $z \in \partial \Omega$}.
		\end{cases}
	\]
	Then $\tilde u \in \PSH{\Omega}\cap\USC{\bar\Omega}$, and since $\tilde u|_{\partial \Omega}\in \PSH{\partial \Omega}$, it follows from Theorem \ref{thm:psh_boundary_coin} that $\tilde u \in \PSH{\bar\Omega}$.
\end{proof}

\begin{lem}\label{lem:psho_pshutv}
	Suppose that $\Omega$ is P-hyperconvex. If $f \in \PSH{\partial\Omega}\cap C(\partial\Omega)$, then there is a function $F \in \PSH{\bar\Omega}\cap C(\bar\Omega)$ such that $F|_{\partial\Omega}=f|_{\partial\Omega}$.
\end{lem}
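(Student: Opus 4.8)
The plan is to treat first the case where $f$ extends to a continuous plurisubharmonic function on a full neighbourhood of $\partial\Omega$ — that is, $f\in\PSHo{\partial\Omega}$ — and then deduce the general case by a uniform approximation. Since $f\in\PSH{\partial\Omega}\cap C(\partial\Omega)$, Theorem~\ref{thm:approx} (with the remark after it) and Dini's theorem provide $f_j\in\PSHo{\partial\Omega}$ with $f_j\searrow f$ on $\partial\Omega$, the convergence being uniform; put $\eta_j:=\sup_{\partial\Omega}(f_j-f)\to0$, and write $f_j=\tilde f_j|_{\partial\Omega}$ with $\tilde f_j$ plurisubharmonic and continuous on an open neighbourhood $W_j$ of $\partial\Omega$. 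Since $\Omega$ is P-hyperconvex, fix once and for all a continuous negative plurisubharmonic exhaustion function $\psi\in\PSH{\bar\Omega}\cap C(\bar\Omega)$ with $\psi<0$ on $\Omega$ and $\psi|_{\partial\Omega}=0$.

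The heart of the argument is to extend each $f_j$ to a function in $\PSH{\bar\Omega}\cap C(\bar\Omega)$, and more generally to show that every $g\in\PSHo{\partial\Omega}$ so extends. Fixing $j$, I would pick $t_j<0$ so small that $\{z\in\bar\Omega:\psi(z)\ge t_j\}\subset W_j$, set $C_j:=1+\max\{\tilde f_j(z):z\in\bar\Omega,\ \psi(z)=t_j\}$, choose $K_j>0$ with $K_jt_j+C_j<\min_{\partial\Omega}f_j$, and glue $\tilde f_j+K_j\psi$ to the constant $K_jt_j+C_j$ across the level set $\{\psi=t_j\}$:
\[
w_j:=\begin{cases}\max\bigl(\tilde f_j+K_j\psi,\ K_jt_j+C_j\bigr)&\text{on }\{\psi>t_j\}\cap\bar\Omega,\\ K_jt_j+C_j&\text{on }\{\psi\le t_j\}\cap\bar\Omega.\end{cases}
\]
The two branches agree along $\{\psi=t_j\}$ (there $\tilde f_j+K_j\psi\le C_j+K_jt_j$, since $\tilde f_j<C_j$), so $w_j\in C(\bar\Omega)$; and since $\psi\le0$ on $\bar\Omega$ while $K_jt_j+C_j$ was chosen below $\min_{\partial\Omega}f_j$, near $\partial\Omega$ the first branch wins, so $w_j|_{\partial\Omega}=f_j$. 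That $w_j\in\PSH{\bar\Omega}$ I would deduce from Theorem~\ref{thm:Loc}: on a small enough ball $\bar B$ around any point, $w_j$ is either constant or — also at points of $\{\psi=t_j\}$, since $\tilde f_j<C_j$ and $\psi\le t_j$ just inside — equal to $\max(\tilde f_j+K_j\psi,\ K_jt_j+C_j)$, a maximum of a constant with the sum of $\tilde f_j|_{\bar\Omega\cap\bar B}\in\PSHo{\bar\Omega\cap\bar B}$ and $\psi|_{\bar\Omega\cap\bar B}\in\PSH{\bar\Omega\cap\bar B}$, hence in $\PSH{\bar\Omega\cap\bar B}\cap C(\bar\Omega\cap\bar B)$.

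Next I would form the envelope. Let $\hat f$ be $f$ on $\partial\Omega$ and $+\infty$ on $\Omega$ (a lower semicontinuous function on $\bar\Omega$), and set $F(z):=\sup\{v(z):v\in\PSHo{\bar\Omega},\ v\le\hat f\}$, so that by Edwards' theorem (Theorem~\ref{thm:Edwards}) $F(z)=\inf\{\int\hat f\,d\mu:\mu\in\J{z}{\bar\Omega}\}$; $F$ is lower semicontinuous and lies between $\min_{\partial\Omega}f$ and $\max_{\partial\Omega}f$ (the upper bound by the maximum principle). The same recipe applied to $f_j$ gives $F_j\ge F$ (as $\hat f_j\ge\hat f$), and since $v\le\hat f_j$ forces $v-\eta_j\le\hat f$ one also gets $F_j-\eta_j\le F$; hence $F_j\to F$ uniformly on $\bar\Omega$. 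For the boundary values: given $\zeta\in\partial\Omega$ and $\mu\in\J{\zeta}{\bar\Omega}$, part (2) of Theorem~\ref{thm:a} gives $\supp(\mu)\subset\partial\Omega$, so $f_j(\zeta)=w_j(\zeta)\le\int w_j\,d\mu=\int_{\partial\Omega}f_j\,d\mu$, and letting $j\to\infty$ yields $f(\zeta)\le\int_{\partial\Omega}f\,d\mu=\int\hat f\,d\mu$; thus $F(\zeta)\ge f(\zeta)$, while $F(\zeta)\le\hat f(\zeta)=f(\zeta)$ by testing against $\delta_\zeta$. So $F|_{\partial\Omega}=f$, and the same argument gives $F_j|_{\partial\Omega}=f_j$; moreover, applying it to the extensions of arbitrary $g\in\PSHo{\partial\Omega}$ shows that each such $\mu$ restricts to a Jensen measure in $\J{\zeta}{\partial\Omega}$.

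It remains to show $F\in\PSH{\bar\Omega}\cap C(\bar\Omega)$. Since $F_j\to F$ uniformly and a uniform limit of functions in $\PSH{\bar\Omega}\cap C(\bar\Omega)$ is again in $\PSH{\bar\Omega}\cap C(\bar\Omega)$ (easily from Definition~\ref{def:psh}), it suffices to prove $F_j\in\PSH{\bar\Omega}\cap C(\bar\Omega)$; and by Theorem~\ref{thm:bvalues} — together with the boundary Jensen inequality, which holds because $F_j|_{\partial\Omega}=f_j\in\PSH{\partial\Omega}$ and the measures $\mu$ above restrict to $\J{\zeta}{\partial\Omega}$ — it is enough to show $F_j\in\PSH{\Omega}\cap C(\bar\Omega)$. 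This is a Perron-type statement: interior continuity ($F_j=F_j^*$ on $\Omega$) by the standard balayage argument, which stays within $\PSHo{\bar\Omega}$ since the maximal plurisubharmonic replacement of a competitor over a small ball is a uniform limit of plurisubharmonic functions near $\bar\Omega$; and continuity up to $\partial\Omega$ from a barrier at each $\zeta\in\partial\Omega$ built out of $\psi$ and the local plurisubharmonic ``solution'' $\tilde f_j$, which pins $\limsup_{z\to\zeta}F_j(z)\le f_j(\zeta)$. I expect this last point — the continuity of the Perron envelope $F_j$ — to be the main obstacle: it is exactly where one must use P-hyperconvexity together with the fact that $f_j$ is plurisubharmonic on a whole neighbourhood of $\partial\Omega$, mere hyperconvexity being insufficient to solve the continuous Dirichlet problem for arbitrary continuous boundary data.
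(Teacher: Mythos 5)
Your construction of the extension $w_j$ of each $f_j\in\PSHo{\partial\Omega}$ is correct, and it is a genuinely different and more elementary route than the paper's: you glue $\tilde f_j+K_j\psi$ to a constant across a sublevel set of the exhaustion function and invoke localization (Theorem \ref{thm:Loc}), whereas the paper multiplies $f$ by a cutoff equal to $1$ near $\partial\Omega$, absorbs the resulting loss of plurisubharmonicity into a large multiple of a \emph{strictly} plurisubharmonic exhaustion function (Kerzman--Rosay), and obtains the required $\PSHo{\bar\Omega}$ approximants directly from outer approximants of $\psi$. Up to and including the identities $F|_{\partial\Omega}=f$, $F_j|_{\partial\Omega}=f_j$ and the uniform convergence $F_j\to F$, your argument is sound.

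The gap is in the final step, and you have flagged it yourself: you never prove that $F_j$ is continuous (equivalently that $F_j=F_j^*$) or that $F_j\in\PSH{\bar\Omega}$. A ``standard balayage argument'' does not give interior continuity of a Perron--Bremermann type envelope: replacing a competitor by its maximal plurisubharmonic modification on a small ball produces a larger competitor but says nothing about upper semicontinuity of the supremum; the classical result here is Walsh's theorem, which requires the boundary continuity as input and uses the affine structure of $\Cn$ in an essential way (the paper deliberately avoids it), and you construct no barrier. The good news is that the gap closes with tools you already have in hand. First, $F_j\le H_j$, where $H_j$ is the harmonic extension of $f_j$: any competitor $v\le \hat f_j$ satisfies $v\le f_j$ on $\partial\Omega$, hence $v\le H_j$ by the maximum principle, and $H_j\in C(\bar\Omega)$ with $H_j|_{\partial\Omega}=f_j$ because hyperconvex domains are Dirichlet regular. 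Combined with $F_j\ge w_j$ this gives $F_j^*\in\PSH{\Omega}\cap\USC{\bar\Omega}$ with $F_j^*|_{\partial\Omega}=f_j=w_j|_{\partial\Omega}$; since $w_j\in\PSH{\bar\Omega}$ and $\Omega$ is P-hyperconvex, Theorem \ref{thm:psh_boundary_coin} (proved before this lemma and independent of it) yields $F_j^*\in\PSH{\bar\Omega}$. Hence $F_j^*$ is itself a competitor (by Theorem \ref{thm:Edwards} the envelopes over $\PSHo{\bar\Omega}$ and over $\PSH{\bar\Omega}$ coincide), so $F_j^*\le F_j$, i.e.\ $F_j=F_j^*\in\PSH{\bar\Omega}\cap C(\bar\Omega)$, and your uniform-limit argument finishes the proof. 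This repaired envelope argument is precisely the one the paper uses later to prove Theorem \ref{thm:b}; for the lemma itself the paper avoids the envelope altogether.
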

\begin{proof}
Since $f$ can be monotonely approximated by smooth plurisubharmonic functions defined in neighborhoods of $\partial \Omega$, it is enough to prove the theorem for smooth $f$. In order to do so, let $\psi \in \PSH{\bar \Omega}\cap C(\bar \Omega)$ be a negative plurisubharmonic exhaustion function for $\Omega$. By Theorem \ref{thm:approx} there is an increasing sequence of  functions $\psi_j \in \PSH{\Omega_j}\cap C(\Omega_j)$, where $\bar \Omega \subset \Omega_j$, such that $\psi_j \rightarrow \psi$ uniformly on $\bar \Omega$. Since $\Omega$ is hyperconvex, it follows from a theorem by Kerzman and Rosay \cite[Proposition 1.2]{KR} that $\Omega$ has a negative \emph{strictly} plurisubharmonic exhaustion function $\phi$.

Let $U$ be an open set such that $\partial \Omega \subset U$ and $f \in \PSH{U}\cap C^{\infty}(U)$, and let $V$ be an open set such that $\partial \Omega \subset V \Subset U$. Finally, let $K$ be a compact set such that $\bar\Omega \subset K \cup U$ and $\partial K \subset V$.
	
	We now choose $M>1$  large enough so that
	\[
		\phi(z) - 1 > M \psi(z)	\quad  \forall \ z \in K.
	\]
	Since $\psi_j \rightarrow \psi$ uniformly on $\bar \Omega$, we may assume that $\psi(z)-\psi_j(z) < \frac{1}{Mj}$, for all $z \in \bar\Omega$. We now let
	\[
		\tilde \psi_j :=
		\begin{cases}
			\max\{\phi-\frac{1}{j}, M \psi_j\}, & z \in \Omega,\\
			M\psi_j, & z \in \Omega_j \setminus \Omega.
		\end{cases}
	\]
	The function $\tilde\psi_j$ is plurisubharmonic and continuous in $\Omega_j$ and $\tilde \psi_j= \phi-\frac{1}{j}$ on $K$.
	
	Now let $\theta$ be a smooth function such that $\theta(z)= 1$ for $z \in V$ and $\supp(\theta)\subset U$. Since $\tilde\psi_j$ is strictly plurisubharmonic where $\theta$ is non-constant, we can choose $C$ so large that the function
	\[
		F_j:=C\tilde \psi_j+\theta f,
	\]
	belongs to $\PSHo{\bar\Omega}$. We now note that 
	\[
		\max\{\phi, M \psi\}-\max\{\phi-\frac{1}{j}, M \psi_j\} \leq \frac{1}{j},
	\]
	and let $\tilde\psi=\max\{\phi, M \psi\}$ and $F:=C\tilde \psi+ \theta f$. We then have
	\[
		F \geq  F_j \geq F-\frac{1}{j},
	\]
	so it follows by uniform convergence that $F\in \PSH{\bar\Omega}\cap C(\bar\Omega)$. Furthermore, since for $z \in \partial \Omega$,
	\[
		0 \leq f(z)-F_j(z)=-C\tilde \psi_j(z)=-CM\psi_j \leq \frac{C}{j},
	\]
	we see that $F(z)=f(z)$ on $\partial \Omega$.
\end{proof}
Since P-hyperconvexity is characterized by the property that all Jensen measures for the boundary, has support on the boundary, the question arises, what difference there is between $\J{z}{\bar\Omega}$ and $\J{z}{\partial\Omega}$, for $z \in \partial \Omega$. The following theorem shows that for P-hyperconvex $\Omega$, there is no difference at all, and this property characterizes P-hyperconvexity.
\begin{thm} \label{thm:phxjensen}
	If $\Omega$ is  P-hyperconvex, then $\J{z}{\partial \Omega}=\J{z}{\bar \Omega}$ for every $z \in \partial \Omega$.
\end{thm}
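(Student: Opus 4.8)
The plan is to establish the two inclusions $\J{z}{\partial\Omega}\subseteq\J{z}{\bar\Omega}$ and $\J{z}{\bar\Omega}\subseteq\J{z}{\partial\Omega}$ separately; only the second of these uses P-hyperconvexity. For the first inclusion, which in fact holds for every bounded domain, I would argue as follows. Let $\nu\in\J{z}{\partial\Omega}$, and regard it as a Borel probability measure on $\bar\Omega$ carried by the closed set $\partial\Omega$. If $u\in\PSHo{\bar\Omega}$, then $u$ is the restriction of a function continuous and plurisubharmonic on a neighbourhood of $\bar\Omega$, hence also on a neighbourhood of $\partial\Omega$, so $u|_{\partial\Omega}\in\PSHo{\partial\Omega}$. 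Since $z\in\partial\Omega$ and $\nu$ is a Jensen measure for $\partial\Omega$, we get $u(z)\le\int_{\partial\Omega}u\,d\nu=\int_{\bar\Omega}u\,d\nu$, so $\nu\in\J{z}{\bar\Omega}$.

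For the reverse inclusion $\J{z}{\bar\Omega}\subseteq\J{z}{\partial\Omega}$ — which carries the content of the theorem — I would use P-hyperconvexity twice. Let $\mu\in\J{z}{\bar\Omega}$. By the implication $(1)\Rightarrow(2)$ of Theorem \ref{thm:phxb}, $\supp(\mu)\subset\partial\Omega$, so $\mu$ is a probability measure on $\partial\Omega$ and asking whether $\mu\in\J{z}{\partial\Omega}$ is meaningful. Fix $u\in\PSHo{\partial\Omega}$; by definition $u\in\PSH{\partial\Omega}\cap C(\partial\Omega)$, so Lemma \ref{lem:psho_pshutv} provides a function $F\in\PSH{\bar\Omega}\cap C(\bar\Omega)$ with $F|_{\partial\Omega}=u$, and Theorem \ref{thm:approx} provides $F_j\in\PSHo{\bar\Omega}$ with $F_j\searrow F$ on $\bar\Omega$. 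Since $\mu\in\J{z}{\bar\Omega}$, we have $F_j(z)\le\int F_j\,d\mu$ for every $j$. Because $F\le F_j\le F_1$ with $F$ and $F_1$ bounded on the compact set $\bar\Omega$, dominated convergence gives $\int F_j\,d\mu\to\int F\,d\mu$, whence $u(z)=F(z)\le\int F\,d\mu=\int u\,d\mu$, the last equality because $\mu$ is supported on $\partial\Omega$, where $F=u$. Thus $\mu\in\J{z}{\partial\Omega}$, completing the proof.

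The main obstacle is precisely this reverse inclusion. Without a regularity hypothesis a Jensen measure for $\bar\Omega$ anchored at a boundary point could put mass in the interior, and even once one knows it does not, a function plurisubharmonic merely near $\partial\Omega$ need not extend to one plurisubharmonic near $\bar\Omega$, so $\mu$ cannot be tested against it directly. The equivalence $(1)\Leftrightarrow(2)$ of Theorem \ref{thm:phxb} removes the first difficulty and the extension Lemma \ref{lem:psho_pshutv} removes the second; everything else (restriction of test functions, the monotone passage to the limit) is routine.
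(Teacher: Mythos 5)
Your proposal is correct and follows essentially the same route as the paper: restriction of test functions for the easy inclusion, and for the reverse inclusion the extension Lemma \ref{lem:psho_pshutv} combined with the fact that Jensen measures at boundary points are supported on $\partial\Omega$. The only difference is cosmetic: the paper invokes $F\in\PSH{\bar\Omega}$ directly to get $F(z)\le\int F\,d\mu$ from Definition \ref{def:psh}, whereas you re-derive this via Theorem \ref{thm:approx} and dominated convergence, which is valid but redundant.
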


\begin{proof}
	Since $\PSHo{\bar\Omega}|_{\partial \Omega} \subset \PSHo{\partial \Omega}$, $\J{z}{\partial \Omega} \subset \J{z}{\bar \Omega}$ for all $z\in \partial\Omega$. Hence it suffices to show the reverse inclusion. Assume that $f \in \PSHo{\partial \Omega}$. Then, by Lemma \ref{lem:psho_pshutv}, there is a function $F\in \PSH{\bar\Omega}$ such that $F(z)=f(z)$ for $z \in \partial \Omega$. Then for $z \in \partial \Omega$ and $\mu \in \J{z}{\bar \Omega}$,
	\[
		f(z) =F(z) \leq \int F \, d\mu = \int f \, d\mu,
	\]
	where the last integral makes sense, since $\mu$ is only supported on $\partial \Omega$.
\end{proof}

\section{Plurisubharmonic extension} \label{sec:pshext}
In this section we want to study those functions $f \in \USC{\partial \Omega}$ which are boundary values of a function $F \in \PSH{\bar \Omega}$. We will see that if $\Omega$ is P-hyperconvex, a natural characterisation in terms of plurisubharmonicity on the boundary is possible.

We begin with some comments on when $f \in C(\partial \Omega)$ can be extended to a function $F \in \PSH{\Omega}\cap C(\bar \Omega)$.

\begin{prop}\label{prop:hx_ext}
Let $\Omega$ be a bounded hyperconvex domain in $\Cn$ and $f \in \PSH{\partial \Omega}\cap C (\partial\Omega)$. Then there exists a function $F \in \PSH{\Omega}\cap C(\bar\Omega)$ such that $F|_{\partial \Omega}=f$.
\end{prop}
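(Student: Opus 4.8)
The plan is to take $F$ to be the Perron--Bremermann envelope $u_f$ of $f$: the upper envelope of the family of all $v\in\PSH{\Omega}$ whose boundary limit superior $\limsup_{z\to\xi}v(z)$ is $\le f(\xi)$ at every $\xi\in\partial\Omega$ ($z$ ranging over $\Omega$). This family is nonempty since it contains the constant $\min_{\partial\Omega}f$, and by the maximum principle every member satisfies $v\le\max_{\partial\Omega}f$, so $u_f$ is bounded above. Let $h\in C(\bar\Omega)$ be the harmonic function on $\Omega$ with $h|_{\partial\Omega}=f$; it exists because $\Omega$ is hyperconvex, so $-\psi$ is a barrier at every boundary point, where $\psi\in\PSH{\Omega}\cap C(\bar\Omega)$ is a negative plurisubharmonic exhaustion function. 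Since $v-h$ is subharmonic with boundary limit superior $\le 0$ for every $v$ in the family, the maximum principle shows that the family is exactly $\{v\in\PSH{\Omega}:v\le h\text{ on }\Omega\}$; in particular the upper semicontinuous regularization $u_f^*$ satisfies $u_f^*\le h$ and hence lies in the family, whence $u_f^*=u_f$ is plurisubharmonic on $\Omega$, and $u_f\le h$. It remains to prove (i) $u_f\in C(\Omega)$ and (ii) $\lim_{z\to\xi}u_f(z)=f(\xi)$ for every $\xi\in\partial\Omega$.

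For (i), having written $u_f=\sup\{v\in\PSH{\Omega}:v\le h\}$ with $h$ continuous, the continuity of $u_f$ on $\Omega$ is exactly Walsh's theorem on the continuity of envelopes of plurisubharmonic functions, which requires only that $\Omega$ be bounded and $h$ continuous.

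For (ii), from $u_f\le h$ and $\lim_{z\to\xi}h(z)=f(\xi)$ we already get $\limsup_{z\to\xi}u_f(z)\le f(\xi)$, so only the reverse inequality is at stake, and this is where the hypothesis $f\in\PSH{\partial\Omega}$ enters. Fix $\xi_0\in\partial\Omega$ and $\epsilon>0$. By Theorem \ref{thm:approx} and Dini's theorem there is $g\in\PSHo{\partial\Omega}$ with $f\le g\le f+\epsilon$ on $\partial\Omega$; let $\tilde g\in\PSH{U}\cap C(U)$ extend $g$ to an open set $U\supset\partial\Omega$. Choose $\delta>0$ small enough that $\{z\in\bar\Omega:\psi(z)\ge-\delta\}\subset U$, a constant $c<\min_{\partial\Omega}f-2\epsilon$, and $A>0$ so large that $\tilde g+A\psi-2\epsilon<c$ on a neighbourhood of $\{\psi=-\delta\}$ while $\tilde g+A\psi-2\epsilon>c$ on a one-sided neighbourhood of $\partial\Omega$. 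Writing $W:=\{z\in\bar\Omega:\psi(z)>-\delta\}$, the function
\[
	v_\epsilon(z):=\begin{cases}\max\{\tilde g(z)+A\psi(z)-2\epsilon,\ c\}, & z\in W\cap\Omega,\\ c, & z\in\Omega\setminus W,\end{cases}
\]
patches together to a function plurisubharmonic on $\Omega$, continuous on $\bar\Omega$, and belonging to the Perron family; near $\xi_0$ it equals $\tilde g+A\psi-2\epsilon$, so $\liminf_{z\to\xi_0}v_\epsilon(z)\ge\tilde g(\xi_0)-2\epsilon\ge f(\xi_0)-2\epsilon$. Hence $\liminf_{z\to\xi_0}u_f(z)\ge f(\xi_0)-2\epsilon$, and letting $\epsilon\to0$ proves (ii). With (i) and (ii), $u_f$ extends continuously to $\bar\Omega$ with boundary values $f$, so $F:=u_f$ is the required function.

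I expect two points to need care. The first is the assertion that the piecewise formula for $v_\epsilon$ defines a single plurisubharmonic (and continuous) function on $\Omega$: this works precisely because $A$ may be taken large, which forces $\tilde g+A\psi-2\epsilon<c$ near $\{\psi=-\delta\}$, so that there the $\max$ equals the constant $c$ and the two pieces agree. The second is the appeal to Walsh's theorem for the interior continuity in (i). Note finally that the hypothesis $f\in\PSH{\partial\Omega}$ is genuinely used --- it is what supplies the approximant $g\in\PSHo{\partial\Omega}$ and hence the lower barrier $v_\epsilon$ --- and without it the conclusion of the proposition is false.
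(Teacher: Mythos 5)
Your argument is correct in substance, but it takes a genuinely different route from the paper, which disposes of the proposition in two lines by citation: the extension of functions in $\PSHo{\partial\Omega}$ to $\PSH{\Omega}\cap C(\bar\Omega)$ is taken from \cite{HHD}, and the passage to a general $f\in\PSH{\partial\Omega}\cap C(\partial\Omega)$ is delegated to \cite[Theorem 3.5]{W}. You instead give a self-contained construction: the Perron--Bremermann envelope $u_f$, identified with $\sup\{v\in\PSH{\Omega}:v\le h\}$ for the harmonic majorant $h$; lower barriers obtained by gluing $\tilde g+A\psi-2\epsilon$ to the constant $c$ along a sublevel set of the exhaustion $\psi$; and Walsh's theorem for continuity. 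The gluing step is sound: taking $A$ large forces the maximum to equal $c$ near $\{\psi=-\delta\}$, so the two pieces agree on an open set and define a function in $\PSH{\Omega}\cap C(\bar\Omega)$ whose boundary values are $g-2\epsilon\le f-\epsilon$, so $v_\epsilon$ genuinely lies in the Perron family while $\liminf_{z\to\xi_0}v_\epsilon(z)\ge f(\xi_0)-2\epsilon$. What your approach buys is transparency and independence from \cite{HHD}; what it costs is the reliance on Walsh's theorem \cite{Wa}, which the authors remark (after Corollary \ref{thm:ppsh_ext}) they deliberately avoided elsewhere because its proof leans on the affine structure of $\Cn$ --- for this proposition that is a stylistic rather than a mathematical objection.

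One point does need correcting. Walsh's theorem is not valid under the hypotheses ``$\Omega$ bounded and $h$ continuous'' alone: its standard statement (and its translation-based proof) takes as a hypothesis precisely your step (ii), namely that $\lim_{z\to\xi}u_f(z)=f(\xi)$ for every $\xi\in\partial\Omega$, and only then concludes that $u_f\in C(\bar\Omega)$. As written, your step (i) therefore invokes the theorem with a hypothesis you have not yet verified. The repair is purely organizational, since your proof of (ii) nowhere uses (i): establish the boundary behaviour first, then apply Walsh's theorem to obtain continuity on all of $\bar\Omega$. With that reordering the argument is complete.
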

\begin{proof}
It is shown in \cite{HHD} that every $u \in \PSHo{\partial \Omega}$ can can be extended to a function in $\PSH{\Omega}\cap C(\bar\Omega)$. Now \cite[Theorem 3.5]{W} implies the existence of a function $F$ as in the statement of the theorem.
\end{proof}
The above proposition has two flaws: it cannot make any claims of necessity, and it does not give any information whether the extended function $F$ is plurisubharmonic on the compact $\bar\Omega$. To remedy these problems, one has to impose extra regularity on $\Omega$, and in this context, the notion of P-hyperconvexity is very fitting.

\begin{cor}[Theorem \ref{thm:b}]\label{thm:ppsh_ext}
Let $\Omega \Subset \C^n$ be a P-hyperconvex domain and let $f \in \mathcal{USC}(\partial \Omega)$. Then the following are equivalent:
\begin{enumerate}
	\item
		there exists $F \in \PSH{\bar \Omega}$ such that $F|_{\partial \Omega}=f$,
	\item
		$f \in \PSH{\partial \Omega}$.
\end{enumerate}
Furthermore, if $f$ is continuous, the function $F$ can be chosen to belong to $\PSH{\bar\Omega}\cap C(\bar\Omega)$.
\end{cor}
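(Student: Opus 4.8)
The plan is to prove the two implications separately, with the forward direction $(1)\Rightarrow(2)$ being essentially immediate and the substantive work concentrated in $(2)\Rightarrow(1)$, which must be handled in two stages: first the continuous case (where Lemma \ref{lem:psho_pshutv} does most of the work, together with Theorem \ref{thm:phxjensen}), then the general upper semicontinuous case by a monotone approximation argument.

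First I would dispatch $(1)\Rightarrow(2)$. Suppose $F \in \PSH{\bar\Omega}$ with $F|_{\partial\Omega}=f$. Fix $z \in \partial\Omega$ and $\mu \in \J{z}{\partial\Omega}$. By Theorem \ref{thm:phxjensen}, $\J{z}{\partial\Omega}=\J{z}{\bar\Omega}$, so $\mu \in \J{z}{\bar\Omega}$; since $\Omega$ is P-hyperconvex, $\supp(\mu)\subset\partial\Omega$ by Theorem \ref{thm:phxb}. Hence
\[
	f(z)=F(z)\leq\int F\,d\mu=\int_{\partial\Omega} f\,d\mu,
\]
so $f \in \PSH{\partial\Omega}$.

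Next, for $(2)\Rightarrow(1)$ in the continuous case: if $f \in \PSH{\partial\Omega}\cap C(\partial\Omega)$, then Lemma \ref{lem:psho_pshutv} gives directly a function $F \in \PSH{\bar\Omega}\cap C(\bar\Omega)$ with $F|_{\partial\Omega}=f$, which also settles the final ``furthermore'' claim. For the general case $f \in \PSH{\partial\Omega}\cap\USC{\partial\Omega}$, I would invoke Theorem \ref{thm:approx} on the compact set $\partial\Omega$ to get a decreasing sequence $f_j \in \PSHo{\partial\Omega}$ with $f_j \searrow f$. Each $f_j$ is continuous, so by the continuous case there is $F_j \in \PSH{\bar\Omega}\cap C(\bar\Omega)$ with $F_j|_{\partial\Omega}=f_j$. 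The obstacle here is that the $F_j$ need not be decreasing; to fix this I would replace $F_j$ by $G_j:=\min\{F_1,\dots,F_j\}$ — but the minimum is not plurisubharmonic, so instead I would pass to the upper semicontinuous regularization of $\sup\{\psi : \psi \in \PSH{\bar\Omega},\ \psi \leq F_k \text{ for all } k\leq j\}$, or more cleanly, solve an ``envelope'' problem: let $G_j := \sup\{\psi \in \PSHo{\bar\Omega} : \psi \leq f_j \text{ on } \partial\Omega\}^*$. By Edwards' theorem (Theorem \ref{thm:Edwards}) and the fact that Jensen measures for boundary points are supported on $\partial\Omega$, one checks $G_j|_{\partial\Omega}=f_j$ and $G_j$ is decreasing in $j$ (since $f_j$ is); then $F:=\lim_j G_j$ lies in $\PSH{\bar\Omega}$ by Example \ref{ex:monotone_seq}, and $F|_{\partial\Omega}=\lim f_j=f$. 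I expect the main technical point to be verifying that $G_j$ attains the boundary value $f_j$ exactly rather than merely $\leq f_j$; this is where P-hyperconvexity enters decisively, via the fact that for $z \in \partial\Omega$ every competing measure in $\J{z}{\bar\Omega}$ lives on $\partial\Omega$ so the envelope cannot drop below $f_j(z)$ — combined with the extension furnished by Lemma \ref{lem:psho_pshutv} to see it cannot exceed it either. Finiteness of $F$ (so that it is a genuine plurisubharmonic function and not $\equiv-\infty$) follows since $F \geq F_j + (\inf_{\partial\Omega} f - \sup_{\partial\Omega} f_j)$-type bounds keep it bounded below on compact subsets, or simply because $F \geq \psi$ for the negative exhaustion function shifted appropriately.
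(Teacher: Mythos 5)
Your overall strategy coincides with the paper's: $(1)\Rightarrow(2)$ via Theorem \ref{thm:phxjensen} and the support of Jensen measures; the continuous case via Lemma \ref{lem:psho_pshutv}, which as stated does give the ``furthermore'' claim directly (the paper instead re-derives it with a second envelope $\Psi$, so your shortcut is legitimate); and the general upper semicontinuous case by manufacturing a \emph{decreasing} sequence of extensions of the $f_j$ through an envelope construction and passing to the limit. The forward implication and the continuous case are fine.

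The gap is in the upper semicontinuous case, at the step ``$F:=\lim_j G_j$ lies in $\PSH{\bar\Omega}$ by Example \ref{ex:monotone_seq}.'' That example requires each $G_j$ to belong to $\PSH{\bar\Omega}$, and you never establish this. The classical fact that the upper semicontinuous regularization of a bounded supremum of plurisubharmonic functions is again plurisubharmonic is a statement about \emph{open} sets; it yields only $G_j\in\PSH{\Omega}\cap\USC{\bar\Omega}$. Membership in $\PSH{\bar\Omega}$ does not follow formally, because the sub-mean-value inequality against measures in $\J{z}{\bar\Omega}$ is known for the unregularized envelope but may fail for its regularization at points where the two differ. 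This is precisely where the paper invokes Theorem \ref{thm:psh_boundary_coin}: the regularized envelope is plurisubharmonic on $\Omega$, upper semicontinuous on $\bar\Omega$, and shares its boundary values $f_j$ with the function $F_j\in\PSH{\bar\Omega}$ supplied by Lemma \ref{lem:psho_pshutv}, so P-hyperconvexity upgrades it to $\PSH{\bar\Omega}$. A secondary omission: you need the identity $G_j=f_j$ on $\partial\Omega$ to survive the regularization, i.e. $\limsup_{\Omega\ni w\to z}G_j(w)\leq f_j(z)$; Edwards' theorem controls the unregularized envelope at $z$ but not this limit superior. The paper handles this by imposing the constraint $u\leq H_j$ on all of $\bar\Omega$, where $H_j$ is the harmonic extension of $f_j$ (available because P-hyperconvex domains are regular for the Laplacian); your boundary-only constraint produces the same envelope by the maximum principle, but that comparison has to be made explicitly. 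Both repairs are available from the paper's toolbox, so the argument is fixable, but as written the essential appeal to Theorem \ref{thm:psh_boundary_coin} is missing.
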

\begin{proof}
$(1) \Rightarrow (2)$ This follows from the definition of $\PSH{\bar\Omega}$ and Theorem \ref{thm:phxjensen}.

$(2) \Rightarrow (1)$ Since $f \in \PSH{\partial\Omega}$, there is a sequence $f_j \in \PSHo{\partial \Omega}$ such that $f_j \searrow f$. Since $\Omega$ is P-hyperconvex, it is regular with respect to the Laplace equation, and hence there is a continuous function $H_j$ on $\bar\Omega$ that is harmonic on $\Omega$, such that $H_j(z)=f_j(z)$ for $z\in \partial \Omega$. Let
\[
\Phi_j(z) := \sup\{u(z): u\in \PSH{\bar\Omega}, u \leq H_j\}.
\]
It follows from Theorem \ref{thm:Edwards} that
\[
\Phi_j(z) = \sup\{u(z): u\in \PSH{\bar\Omega}\cap C(\bar\Omega), u \leq H_j\},
\]
so being the supremum of a family of continuous functions, $\Phi_j$ is lower semicontinuous. We will show that it in fact is continuous. For this, denote by $\Phi_j^*$ the upper semicontinuous regularization of $\Phi_j$. Then $\Phi_j^* \in \PSH{\Omega}\cap\USC{\bar\Omega}$.  It follows from Lemma \ref{lem:psho_pshutv} that there is a function $F_j \in \PSH{\bar\Omega}\cap C(\bar\Omega)$ such $F_j(z)=f_j(z)$ for $z \in \partial \Omega$, and therefore we have that
\[
	\Phi_j(z) \geq F_j(z)=f_j(z), \quad \forall z \in \partial\Omega.
\]
On the other hand we also have that 
\[
	\limsup_{\Omega \ni w \to z} \Phi_j(w) \leq \limsup_{\Omega \ni w \to z} H_j(w)=H_j(z),
\]
so we see that $\Phi_j^*\in \PSH{\Omega}\cap\USC{\bar\Omega}$, and shares boundary values with the function $F_j \in \PSH{\bar\Omega}$. It follows from Theorem \ref{thm:psh_boundary_coin} that $\Phi_j^*\in \PSH{\bar\Omega}$, and therefore $\Phi_j^*$ belongs to the constituting family for $\Phi_j$. This means that $\Phi_j=\Phi_j^*$, and hence $\Phi_j\in \PSH{\bar\Omega}\cap C(\bar\Omega)$. Now let
\[
\Phi = \lim_{j\to \infty} \Phi_j.
\]
By the construction of $\Phi_j$, it follows that $\Phi|_{\partial \Omega}=f$, and being a decreasing limit of functions in $\PSH{\bar\Omega}$,
 $\Phi$ also belsongs to $\PSH{\bar\Omega}$.

Now suppose that $f \in C(\partial \Omega)$, and let $H$ be the continuous function on $\bar\Omega$ that is harmonic on $\Omega$. Similarly as before we let
\[
\Psi(z) := \sup\{u(z): u\in \PSH{\bar\Omega}, u \leq H\}.
\]
Then as before, using Theorem \ref{thm:Edwards}, we see that $\Psi$ is lower semicontinuous. Furthermore, since  $\Psi(z) \leq \Phi_j$, it follows that $\Psi^*\leq \Phi_j^*=\Phi_j$ for all $j$. This means that 
	\[
		\Psi^* \leq \lim_{j \to \infty} \Phi_j= \Phi,
	\]
but since $\Phi$ belongs to the constituting family of $\Psi$, this means that $\Psi = \Phi$, so we see that $\Phi \in \PSH{\bar\Omega}\cap C(\bar\Omega)$.
\end{proof}
\begin{rem}
This theorem could also have been proved using Proposition \ref{prop:hx_ext} and Walsh's theorem, see \cite{Wa}, but we deliberately chose to avoid the use of Walsh's theorem, since its proof heavily relies on the affine structure of $\Cn$. Instead we wanted to show how Jensen measures can be used to study plurisubharmonic envelopes in situations without any affine structure. For a more thorough discussion on this topic, see Wikstr\"om \cite{W2}.
\end{rem}

\section{Acknowledgements}
It is a pleasure to thank professor E. Poletsky of Syracuse University, H. H. Ph\d{a}m of Hanoi National University of Education and P. \AA{}hag and professor U. Cegrell of Ume\aa{} University for inspiring discussions on the theme of this paper.

\end{document}